\documentclass[12pt]{article}
\usepackage{}
\usepackage{amsthm,amsmath}
\usepackage{amscd}
 \usepackage{amssymb}
 \usepackage{hyperref}
 \usepackage[all]{xypic}
 \usepackage{mathtools}
 \usepackage[utf8]{inputenc}
  \newtheorem{lemma}{Lemma}[section]
 \newtheorem{corollary}[lemma]{Corollary}
 
 \newtheorem{theorem}[lemma]{Theorem}
 \newtheorem{proposition}[lemma]{Proposition}
 
 \newtheorem{remark}[lemma]{Remark}

\usepackage[utf8]{inputenc} 


\def\includegraphics{}


\usepackage{amscd}
\usepackage{amsmath,amssymb,mathrsfs}
\usepackage{hyperref}
\usepackage[all]{xypic}
\usepackage{times}
\usepackage{inputenc}
\usepackage{titlesec}

\newcommand{\dd}{\mathbb{D}}   \newcommand{\ba}{\mathcal {B}^{\alpha}}   
\newcommand{\Cu}{\mathcal{C}_\mu}       \newcommand{\sn}{\sum_{n=0}^{\infty}} 
\newcommand{\ii}{\int_{0}^{1}}    \newcommand{\hd}{H(\mathbb{D})}

 \usepackage{amsthm}
\newcommand{\comment}[1]{}
\newenvironment{proof of Theorem 1.1}{{\noindent\it Proof of Theorem 1.1}\quad}{\hfill $\square$\par}
\newenvironment{proof of Theorem 1.2}{{\noindent\it Proof of Theorem 1.2}\quad}{\hfill $\square$\par}
\newenvironment{proof of Theorem 1.3}{{\noindent\it Proof of Theorem 1.3}\quad}{\hfill $\square$\par}
\newenvironment{proof of Theorem 1.4}{{\noindent\it Proof of Theorem 1.4}\quad}{\hfill $\square$\par}
\newenvironment{proof of Theorem 1.5}{{\noindent\it Proof of Theorem 1.5}\quad}{\hfill $\square$\par}
\newenvironment{proof of Theorem 1.6}{{\noindent\it Proof of Theorem 1.6}\quad}{\hfill $\square$\par}
 
 \textwidth=16.5cm
\textheight=22cm
\topmargin=-0.5cm
\oddsidemargin=0.05cm
\evensidemargin=0.05cm
\advance\headheight1.15pt

\begin{document}

\baselineskip=8pt
\title{\fontsize{15}{0}\selectfont The Ces\`{a}ro-like operator on some analytic function spaces}
\author{\fontsize{11}{0}\selectfont
 Pengcheng Tang$^*$  \\ \fontsize{10}{0}\it{ College of Mathematics and Statistics, Hunan Normal University, Changsha, Hunan 410006, China}\\ 
}
\date{}
\maketitle
\thispagestyle{empty}
\begin{center}
\textbf{\underline{ABSTRACT}}
\end{center}
Let $\mu$ be a  finite positive Borel measure on the interval $[0, 1)$ and  $f(z)=\sum_{n=0}^{\infty}a_{n}z^{n} \in H(\mathbb{D})$. The Ces\`aro-like operator is defined by
$$
\Cu (f)(z)=\sum^\infty_{n=0}\left(\mu_n\sum^n_{k=0}a_k\right)z^n, \ z\in \dd,
$$
where, for $n\geq 0$, $\mu_n$ denotes the $n$-th  moment of the measure  $\mu$, that is,
$\mu_n=\int_{[0, 1)} t^{n}d\mu(t)$. Let $X$ and $Y$ be subspaces of $\hd$, the  purpose of this paper is to study the action of $\Cu$ on distinct pairs $(X, Y)$. The spaces considered in this paper are Hardy space $H^{p}(0<p\leq\infty)$, Morrey space $L^{2,\lambda}(0<\lambda\leq1)$, mean Lipschitz space, Bloch type space, etc.
\begin{flushleft}
{\bf{Keywords:}}  Ces\`{a}ro-like operator, Carleson measure, Hardy  spaces; Morrey space.
\end{flushleft}
\begin{flushleft}
{\bf{MSC 2010:}}  47B38, 30H10
\end{flushleft}
\let\thefootnote\relax\footnote{$^*$Corresponding Author}
\let\thefootnote\relax\footnote{ Pengcheng Tang: www.tang-tpc.com@foxmail.com}

\vspace{1cm}
%

\section{Introduction} \label{Sec:Intro}

\ \ \ \ Let $\mathbb{D}=\{z\in \mathbb{C}:\vert z\vert <1\}$ denote the open unit disk of the complex plane $\mathbb{C}$ and $H(\mathbb{D})$ denote the space of all analytic functions in $\mathbb{D}$ and  $dA(z) =\frac{1}{\pi}dxdy$ the normalized area
Lebesgue measure.

 For $0<\alpha<\infty$, the Bloch-type space, denoted by $\ba$, is defined as
 $$\ba=\{f\in \hd:||f||_{\ba}=|f(0)|+\sup_{z\in\dd}(1-|z|^{2})^{\alpha}|f'(z)|<\infty\}.$$
If $\alpha=1$, then $\ba$ is just the classic Bloch space $\mathcal {B}$.

Let  $0<p\leq\infty$, the classical Hardy space $H^p$  consists of  those functions  $f\in H(\dd)$ for which
$$
||f||_{p}=\sup_{0\leq r<1} M_p(r, f)<\infty,
$$
where
$$
M_p(r, f)= \left(\frac{1}{2\pi}\int_0^{2\pi}|f(re^{i\theta})|^p d\theta \right)^{1/p}, \ 0<p<\infty,
$$
$$M_{\infty}(r, f)=\sup_{|z|=r}|f(z)|.$$

Let $I\subset \partial \mathbb{D}$ be an arc, and  $\vert I\vert $  denote the length of $I$. The Carleson square $S(I)$ is defined as
$$S(I)=\{re^{i\vartheta}:e^{i\vartheta}\in I,\ 1-\frac{\vert I\vert }{2\pi}\leq r<1\}.$$

Let $\mu$ be a positive Borel measure on $\mathbb{D}$. For $0\leq \beta<\infty$ and $0<t<\infty$, we
say that $\mu$ is a $\beta$-logarithmic $t$-Carleson measure (resp.a vanishing $\beta$-logarthmic $t$-Carleson measure) if
$$
\sup_{\vert I\vert \subset \partial\mathbb{D}}\frac{\mu(S(I))(\log\frac{2\pi}{\vert I\vert })^{\beta}}{\vert I\vert ^{t}}<\infty,\ \ \mbox{resp.}\ \ \lim_{\vert I\vert \rightarrow0}\frac{\mu(S(I))(\log\frac{2\pi}{\vert I\vert })^{\beta}}{\vert I\vert ^{t}}=0.
$$
See \cite{Zhao} for more about  logarithmic type Carleson measure.

A positive Borel measure $\mu$ on $[0,1)$ can be seen as a Borel measure on $\mathbb{D}$ by identifying it with the measure $\overline{\mu}$ defined by
$$
 \overline{\mu}(E)=\mu(E\cap [0,1)), \ \ \mbox{for any Borel subset }\ E \ \ \mbox{of}\  \ \mathbb{D}.
$$

In this way, a positive Borel measure $\mu$ on $[0,1)$ is a $\beta$-logarithmic $t$-Carleson measure if and only if there exists a constant $M>0$ such that
$$
\log^{\beta}\frac{e}{1-t}\mu([s,1))\leq M(1-s)^{t},\ \ 0\leq s<1.
$$

 Let $0 < \lambda\leq1$, the Morrey space $L^{2,\lambda}(\dd)$ is the set of all $f\in H^{2}$ such that
$$\sup_{I\subset \partial \dd}\left(\frac{1}{|I|^{\lambda}}\int_{I}|f(e^{i\theta})-f_{I}|^{2}d\theta\right)^{\frac{1}{2}}<\infty.$$
The space is $L^{2,\lambda}(\dd)$ a Banach space under the norm
$$||f||_{L^{2,\lambda}}=|f(0)|+ \sup_{I\subset \partial \dd}\left(\frac{1}{|I|^{\lambda}}\int_{I}|f(e^{i\theta})-f_{I}|^{2}d\theta\right)^{\frac{1}{2}}$$
It is well known that $L^{2,1}=BMOA$. The Morrey spaces increase when the parameter $\lambda$
decreases, so we have the following relation
$$BMOA\subseteq L^{2,\lambda_{2}}\subseteq L^{2,\lambda_{1}}\subseteq H^{2}, \ \ 0\leq \lambda_{1} \leq \lambda_{2}\leq 1. $$
For $0<\lambda\leq 1$ and  any function $f\in L^{2,\lambda}$, it has the following equivalent norm
$$||f||_{L^{2,\lambda}}\asymp |f(0)|+\sup_{w\in \dd}\left((1-|w|^{2})^{1-\lambda}\int_{\dd}|f'(z)|^{2}(1-|\sigma_{w}(z)|^{2})dA(z)\right)^{\frac{1}{2}},$$
where $\sigma_{w}$ stands for the M\"{o}bious transformation $\sigma_{w}(z)=\frac{w-z}{1-z\overline{w}}$.  See \cite{Liu1} for this characterization.

It is well known that functions  $f\in BMOA$ have logarithmic growth,
$$|f(z)|\leq C \log\frac{2}{1-|z|}.$$
This does not remain true for $f\in L^{2,\lambda}$ when $0<\lambda<1$.
Indeed, it follows Lemma 2 of \cite{Liu} that
$$H^{\frac{2}{1-\lambda}}\subseteq L^{2,\lambda} \subseteq H^{2},\ \ 0<\lambda<1.$$
It is known that for $0<\lambda<1$, $f\in L^{2,\lambda}$ satisfies
$$|f(z)|\lesssim\frac{||f||_{L^{2,\lambda}}}{(1-|z|)^{\frac{1-\lambda}{2}}},\ \ z\in\dd\eqno{(1.1)}$$
By (1.1) we have that  $L^{2,\lambda}\subseteq {B}^{\frac{3-\lambda}{2}}$ for all $0<\lambda\leq 1$. When $\lambda=1$,  it is obvious that the inclusion is strictly. For $0<\lambda<1$, the function $h(z)=\sum_{k=0}^{\infty}z^{2^{k}}\in \mathcal {B}\subsetneq \mathcal {B}^{\frac{3-\lambda}{2}}$ shows that the inclusion is also strictly. Since $h$  has a radial limit almost nowhere and hence $h\notin H^{p}$ for any $0<p<\infty$, this implies that $h\notin L^{2,\lambda}$.
The reader is referred to \cite{Wulan,Liu2,Wuz,Wuz1} for more about Morrey space.


Let  $1\leq p<\infty$ and $0<\alpha\leq 1$, the mean Lipschitz space $\Lambda^p_\alpha$ consists of those functions $f\in H(\dd)$ having a non-tangential limit  almost everywhere such that $\omega_p(t, f)=O(t^\alpha)$ as $t\to 0$. Here $\omega_p(\cdot, f)$ is the integral modulus of continuity of order $p$ of the function $f(e^{i\theta})$. It is  known (see \cite{du}) that $\Lambda^p_\alpha$ is a subset of $H^p$ and
$$\Lambda^p_\alpha=\left(f\in H(\dd):M_p(r, f')=O\left(\frac{1}{(1-r)^{1-\alpha}}\right), \ \ \mbox{as}\ r\rightarrow 1\right).$$
The space $\Lambda^p_\alpha$ is a Banach space with the norm $||\cdot||_{\Lambda^p_\alpha}$ given by
$$
\|f\|_{\Lambda^p_\alpha}=|f(0)|+\sup_{0\leq r<1}(1-r)^{1-\alpha}M_p(r, f').
$$
In \cite{bo}, Shapiro and Sledd proved that
$$\Lambda^{p}_{\frac{1}{p}}\subseteq BMOA. \ \ 1<p<\infty.$$
When $p=1$, the space $\Lambda^1_1$ is equivalent to the space
$$\left\{f\in H(\dd): \sup_{0\leq r<1}(1-r)M_{1}(f'',r)<\infty\right\}.\eqno{(1.2)}$$
See \cite{bla,ar} for more about  Lipschitz space and related analytic function spaces.

 For $f(z)=\sum_{n=0}^\infty \hat{f}(n)z^n\in \hd$, the Ces\`{a}ro operator $\mathcal {C}$ is defined  by
 $$
\mathcal {C}(f)(z)=\sum_{n=0}^\infty\left(\frac{1}{n+1}\sum_{k=0}^n \widehat{f}(k)\right)z^n=\int_{0}^{1}\frac{f(tz)}{1-tz}dt, \  z\in\dd.
$$
The Ces\`{a}ro operator   $\mathcal {C}$ is  bounded on $H^{p}$ for $0<p<\infty$. The case of $1<p<\infty$
follows from a result of Hardy on Fourier series \cite{ha} together with the Riesz
transform. Siskakis \cite{sis1} give an alternative proof of this result and to extend it to
$p = 1$ by using semigroups of  composition operators.  A direct proof of the boundedness on $H^1$ was given by Siskakis in \cite{sis2}. Miao
\cite{miao} proved the case $0 <p < 1$. Stempak \cite{ste} gave a proof valid for $ 0 < p \leq 2$.
 Andersen \cite{ad} and Nowak \cite{no} provided another proof valid for all $0<p<\infty$. In the case $p=\infty$, since $\mathcal {C}(1)(z)=\log\frac{1}{1-z}\notin H^{\infty}$, so that $\mathcal {C}(H^{\infty})\nsubseteq H^{\infty}$. Danikas  and  Siskakis \cite{sis4} proved  that $\mathcal {C}(H^{\infty})\nsubseteq BMOA$ and $\mathcal {C}(BMOA)\nsubseteq BMOA$. Ces\`{a}ro operator   $\mathcal {C}$ act on  weighted Bergman spaces, Dirichlet space  and general mixed normed spaces $H(p,q,\varphi)$ the reader is referred to \cite{sis2,v,ale,gla,shi}.

Recently, Galanopoulos, Girela and Merch\'an \cite{gla1} introduced a Ces\`aro-like operator $\Cu$ on $\hd$, which is a natural generalization of the classical Ces\`{a}ro operator $\mathcal {C}$.
They consider the following generalization: For a positive Borel measure
$\mu$ on the interval $[0, 1)$ they define the operator
 $$
\Cu (f)(z)=\sum^\infty_{n=0}\left(\mu_n\sum^n_{k=0}\widehat{f}(k)\right)z^n=\int_{0}^{1}\frac{f(tz)}{(1-tz)}d\mu(t), \  z\in\dd. \eqno{(1.3)}
$$
where $\mu_{n}$ stands for the  moment of order $n$ of $\mu$, that is,
  $\mu_{n}=\ii t^{n}d\mu(t)$. They studied the  operators $\Cu$  acting on distinct spaces of analytic functions(e.g. Hardy space, Bergman space, Bloch space, etc.).

 The  Ces\`aro-like operator $\Cu$ defined above has  attracted the interest  of many mathematicians.  Jin and Tang \cite{jin} studied  the boundedness(compactness) of  $\Cu$  from one Dirichlet-type space  into another one.
Bao, Sun and Wulan \cite{bao} studied the range of $\Cu$ acting on $H^{\infty}$. They proved that  $\Cu(H^{\infty})\subset \cap_{p>1}\Lambda^{p}_{\frac{1}{p}}$ if and only if   $\mu$ is a $1$-Carleson measure. This gives an answer to the question which was left open in \cite{gla1}.   In fact, they worked on a  more general version.  Just recently, Blasco \cite{blas}  used a different method to also get the same result.
Based on the previous results, it is natural to discuss
the range of $H^{\infty}$ under the action of $\Cu$ when $\mu$ is an $\alpha$-Carleson
measure with $0<\alpha < 1$. Furthermore,  what is the condition  for the measure  $\mu$ such that $\Cu(H^{p})\subset \cap_{q>1}\Lambda^{q}_{\frac{1}{q}}$?
We  shall  prove the following  general version of the results, which  give the answers to these questions. As consequences of our study, we may reproduce many of the known conclusions as well as obtain some new results.
 \begin{theorem}
Suppose  $0<p\leq \infty$, $0<\lambda \leq1$ and  $\mu$ is a  finite positive Borel measure on the interval $[0,1)$. Let $ X $   be  subspace of  $H(\dd)$ with $L^{2,\lambda}\subseteq X\subseteq\mathcal {B}^{\frac{3-\lambda}{2}}$ . Then   $\Cu(H^{p})\subseteq X$ if and only if $\mu$ is a $\frac{1+\lambda}{2}+\frac{1}{p}$-Carleson measure.
 \end{theorem}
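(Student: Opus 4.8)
The plan is to use the sandwich $L^{2,\lambda}\subseteq X\subseteq\mathcal B^{\frac{3-\lambda}{2}}$ to split the equivalence into two one-sided statements. Writing $s=\frac{1+\lambda}{2}+\frac1p$ (so $s=\frac{1+\lambda}{2}$ when $p=\infty$), it suffices to prove: (a) if $\mu$ is an $s$-Carleson measure then $\Cu(H^p)\subseteq L^{2,\lambda}$; and (b) if $\Cu(H^p)\subseteq\mathcal B^{\frac{3-\lambda}{2}}$ then $\mu$ is an $s$-Carleson measure. Then (a) gives the ``if'' part (since $L^{2,\lambda}\subseteq X$) and (b) the ``only if'' part (since $X\subseteq\mathcal B^{\frac{3-\lambda}{2}}$). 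For (b) I would first observe that convergence in $H^p$ and in $\mathcal B^{\frac{3-\lambda}{2}}$ both force coefficientwise convergence, so $\Cu\colon H^p\to\mathcal B^{\frac{3-\lambda}{2}}$ has closed graph and hence is bounded. Then I would test on the normalized family $f_b(z)=\bigl(\frac{1-b^2}{(1-bz)^2}\bigr)^{1/p}$, $0<b<1$ (and $f_b\equiv1$ if $p=\infty$), for which $\|f_b\|_{H^p}\asymp1$. From $\Cu f_b(z)=\int_0^1(1-b^2)^{1/p}(1-btz)^{-2/p}(1-tz)^{-1}\,d\mu(t)$, differentiating in $z$ and keeping only the non-negative term coming from $\frac{d}{dz}\frac1{1-tz}$, I expect
$$(\Cu f_b)'(b)\ \ge\ (1-b^2)^{1/p}\int_b^1\frac{t\,d\mu(t)}{(1-b^2t)^{2/p}(1-bt)^2}\ \gtrsim\ \frac{\mu([b,1))}{(1-b)^{2+1/p}},$$
using $1-b^2t\asymp1-bt\asymp1-b$ for $t\in[b,1)$. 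Combining with $(1-b^2)^{\frac{3-\lambda}{2}}|(\Cu f_b)'(b)|\le\|\Cu f_b\|_{\mathcal B^{\frac{3-\lambda}{2}}}\lesssim1$ yields $\mu([b,1))\lesssim(1-b)^{s}$, i.e. $\mu$ is $s$-Carleson.

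For (a), since $L^{2,\lambda}\subseteq X$ it is enough to show $\|\Cu f\|_{L^{2,\lambda}}\lesssim\|f\|_{H^p}$, and I would use the equivalent norm of $L^{2,\lambda}$ in its fractional-Carleson-measure form, $\|g\|_{L^{2,\lambda}}^2\asymp|g(0)|^2+\sup_I|I|^{-\lambda}\int_{S(I)}|g'(z)|^2(1-|z|^2)\,dA(z)$ (equivalent to the $\sigma_w$-norm recalled in the Introduction). With $g=\Cu f$ and $\phi_t(z)=\frac{f(tz)}{1-tz}$ one has $g'(z)=\int_0^1\phi_t'(z)\,d\mu(t)$, $\phi_t'(z)=\frac{tf'(tz)}{1-tz}+\frac{tf(tz)}{(1-tz)^2}$. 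Fixing an arc $I$ of length $\delta$, the key device is to split $\mu=\mu|_{[0,1-\delta)}+\mu|_{[1-\delta,1)}$. For the first piece, the $H^p$ growth bounds $|f(w)|\lesssim\|f\|_{H^p}(1-|w|)^{-1/p}$ and $|f'(w)|\lesssim\|f\|_{H^p}(1-|w|)^{-1-1/p}$ with $w=tz$, together with $|1-tz|\ge1-t$, give the crude pointwise bound $|\phi_t'(z)|\lesssim\|f\|_{H^p}(1-t)^{-2-1/p}$ valid for all $z\in\dd$; since $2+\frac1p>s$ and $s$-Carleson measures obey $\int_0^{1-\delta}(1-t)^{-c}d\mu(t)\lesssim\delta^{s-c}$ for $c>s$, this produces a $z$-independent bound $\bigl|\int_0^{1-\delta}\phi_t'(z)\,d\mu(t)\bigr|\lesssim\|f\|_{H^p}\,\delta^{-(3-\lambda)/2}$, whose square integrated against $(1-|z|^2)\,dA$ over $S(I)$ (which has mass $\asymp\delta^3$) contributes $\lesssim\|f\|_{H^p}^2\,\delta^{\lambda}$.

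For the second piece the crude bound is too lossy, but now $\mu([1-\delta,1))\lesssim\delta^{s}$ is small. Bounding $\bigl|\int_{1-\delta}^1\phi_t'(z)\,d\mu(t)\bigr|\le\int_{1-\delta}^1|\phi_t'(z)|\,d\mu(t)$, squaring, integrating over $S(I)$ against $(1-|z|^2)\,dA$, and using Tonelli and the Cauchy--Schwarz inequality in the $z$-variable, I would reduce to estimating $\bigl(\int_{1-\delta}^1A_t^{1/2}\,d\mu(t)\bigr)^2$, where $A_t=\int_{S(I)}|\phi_t'(z)|^2(1-|z|^2)\,dA(z)$. The crucial sub-estimate is $A_t\lesssim\|f\|_{H^p}^2(1-t)^{-1-2/p}$ for $t\in[1-\delta,1)$: here one must use the exact size $|1-tz|\asymp(1-t|z|)+|\arg z|$ on $S(I)$, integrate first in $\arg z$ to gain a factor $(1-t|z|)^{-1}$, and then in $|z|$; the same computation works uniformly over arcs of length $\ge1-t$, with extra slack when $I$ is far from $1$. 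Granting it, $A_t^{1/2}\lesssim\|f\|_{H^p}(1-t)^{-1/2-1/p}$, and since $\frac12+\frac1p<s$ one gets $\int_{1-\delta}^1(1-t)^{-1/2-1/p}d\mu(t)\lesssim\delta^{\lambda/2}$, so this piece also contributes $\lesssim\|f\|_{H^p}^2\,\delta^{\lambda}$. Adding the two pieces gives $\int_{S(I)}|(\Cu f)'(z)|^2(1-|z|^2)\,dA(z)\lesssim\|f\|_{H^p}^2\,|I|^{\lambda}$, i.e. $\Cu f\in L^{2,\lambda}$ with the required norm control.

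The hard part will be the estimate $A_t\lesssim\|f\|_{H^p}^2(1-t)^{-1-2/p}$ in (a): replacing $|1-tz|$ by $1-t|z|$ there destroys the angular decay and only recovers the Bloch-type bound $|(\Cu f)'(z)|\lesssim\|f\|_{H^p}(1-|z|)^{-(3-\lambda)/2}$, which is precisely the borderline that fails to be square-integrable against $(1-|z|^2)\,dA$ over $S(I)$ when $\lambda\le1$; so the full geometry of $|1-tz|$ on the Carleson box is essential, and the bookkeeping of the powers of $1-t$ generated by the various $s$-Carleson integrals — and, for arcs not centered near $1$, the trading of the slack $|I|\le|\arg z|$ against them — will require care (a small number of cases according to the size of $|\arg z|$ relative to $1-t$ and to $|I|$ seems unavoidable). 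The remaining ingredients — the closed-graph reduction, the extremal-function computation, and the one-dimensional estimates $\int_0^{1-\delta}(1-t)^{-c}d\mu(t)\lesssim\delta^{s-c}$ for $c>s$ (and the analogous estimate on $[1-\delta,1)$ for $c<s$) — are routine.
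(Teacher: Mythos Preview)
Your necessity argument (b) is essentially the paper's: both test on kernels of the form $(1-a)^{c_1}/(1-az)^{c_2}$ (the paper takes $f_a(z)=(1-a)/(1-az)^{1+1/p}$, you take the $p$-th root of a reproducing kernel), evaluate $(\Cu f_a)'$ at the real point $a$, and read off the Carleson condition from the inclusion $X\subseteq\mathcal B^{(3-\lambda)/2}$. One naming slip: the passage from $\int_{S(I)}\bigl(\int|\phi_t'|\,d\mu\bigr)^2(1-|z|^2)\,dA$ to $\bigl(\int A_t^{1/2}\,d\mu\bigr)^2$ is Minkowski's integral inequality, not Cauchy--Schwarz.

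For the sufficiency (a) the routes differ. The paper works with the M\"obius form $\sup_w(1-|w|^2)^{1-\lambda}\int_\dd|g'|^2(1-|\sigma_w|^2)\,dA$, applies Minkowski once to the \emph{entire} $\mu$-integral, removes the factor $(1-t|z|)^{-2(1+\delta_p)}$ via the elementary inequality $(1-t|z|)^{-\alpha}\le(1-t)^{\sigma-\alpha}(1-|z|)^{-\sigma}$ for a well-chosen $1-\lambda<\sigma<1$, and then invokes the two-point Forelli--Rudin estimate (Lemma~\ref{lm2.4}) to get $\int_\dd(1-|z|)^{1-\sigma}|1-tz|^{-2}|1-\bar wz|^{-2}\,dA(z)\asymp|1-t\bar w|^{-1-\sigma}$. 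The remaining outer $\mu$-integral is precisely condition~(3) of Proposition~\ref{pro3.1}, equivalent to $\mu$ being $s$-Carleson. No splitting of $\mu$ and no reference to individual Carleson boxes is needed.

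Your Carleson-box approach with the split at $1-|I|$ also works and is more self-contained, but the case analysis you flag as the hard part is in fact unnecessary. Since $S(I)\subset\dd$, it suffices to bound $\int_\dd|\phi_t'(z)|^2(1-|z|^2)\,dA(z)$; a direct polar computation using only Lemma~\ref{lm2.5} (integrate in $\theta$ first) gives
\[
\int_\dd\frac{(1-|z|)\,dA(z)}{(1-t|z|)^{2+2/p}|1-tz|^{2}}\ \asymp\ \int_0^1\frac{(1-r)\,dr}{(1-tr)^{3+2/p}}\ \lesssim\ (1-t)^{-1-2/p},
\]
uniformly in $t$, with no dependence on the arc. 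So your key sub-estimate $A_t\lesssim\|f\|_{H^p}^2(1-t)^{-1-2/p}$ drops out immediately once you enlarge $S(I)$ to $\dd$. With this simplification the two proofs are close in spirit: both rest on Minkowski plus a Forelli--Rudin area estimate, the paper packaging the Carleson condition through Proposition~\ref{pro3.1} where you use the one-dimensional tail bounds $\int(1-t)^{-c}\,d\mu\lesssim\delta^{s-c}$.
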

\begin{theorem}
Suppose   $0<p<\infty$, $1<q<\infty$  and  $\mu$ is a  finite positive Borel measure on the interval $[0,1)$. Let $ X$ and $Y$ be subspaces
of  $H(\dd)$ such that $H^{p}\subseteq X\subseteq \mathcal {B}^{1+\frac{1}{p}}$ and  $\Lambda^{q}_{\frac{1}{q}}\subseteq Y\subseteq \mathcal {B}$. Then the following statements hold.
\\(1)\ The operator  $\Cu$ is bounded  from  $X$ into  $Y$ if and only if   $\mu$ is a  $1+\frac{1}{p}$-Carleson measure.
\\(2)\ If $\mu$ is a $1$-logarithmic $1+\frac{1}{p}$-Carleson measure, then $\Cu:X\rightarrow \Lambda^{1}_{1}$ is bounded.
 \end{theorem}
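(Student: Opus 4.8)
The plan is to derive both assertions from boundedness statements between the two extreme spaces of each sandwich, the technical work being concentrated at those endpoints. Throughout I use the representation (1.3), $\Cu(f)(z)=\ii\frac{f(tz)}{1-tz}\,d\mu(t)$, which after differentiating (the interchange with $d\mu$ being justified by local uniform bounds) gives
$$(\Cu f)'(z)=\ii\left(\frac{t f'(tz)}{1-tz}+\frac{t f(tz)}{(1-tz)^{2}}\right)d\mu(t),$$
$$(\Cu f)''(z)=\ii\left(\frac{t^{2} f''(tz)}{1-tz}+\frac{2t^{2} f'(tz)}{(1-tz)^{2}}+\frac{2t^{2} f(tz)}{(1-tz)^{3}}\right)d\mu(t).$$
I also use the elementary bounds $|f(w)|\lesssim \|f\|_{\mathcal{B}^{1+\frac{1}{p}}}(1-|w|)^{-\frac{1}{p}}$, $|f'(w)|\lesssim \|f\|_{\mathcal{B}^{1+\frac{1}{p}}}(1-|w|)^{-1-\frac{1}{p}}$, $|f''(w)|\lesssim \|f\|_{\mathcal{B}^{1+\frac{1}{p}}}(1-|w|)^{-2-\frac{1}{p}}$ valid for $f\in\mathcal{B}^{1+\frac{1}{p}}$, together with the classical estimates for the $L^{s}$ means on circles of powers of a Cauchy kernel and for integrals $\ii(1-tr)^{-c}\,d\nu(t)$ against Carleson-type measures $\nu$; these are classical and would be recorded as preliminary lemmas.

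First I would establish the estimate underlying part (1): if $\mu$ is a $1+\frac{1}{p}$-Carleson measure, then $\Cu:\mathcal{B}^{1+\frac{1}{p}}\to\Lambda^{q}_{\frac{1}{q}}$ is bounded. Fix $f$ with $\|f\|_{\mathcal{B}^{1+\frac{1}{p}}}\le 1$ and apply Minkowski's integral inequality to the formula for $(\Cu f)'$, bounding $f$ and $f'$ pointwise but keeping the Cauchy kernels intact. Using the classical circular estimates
$$\left(\frac{1}{2\pi}\int_{0}^{2\pi}\frac{d\theta}{|1-tre^{i\theta}|^{q}}\right)^{1/q}\asymp(1-tr)^{\frac{1}{q}-1},\qquad \left(\frac{1}{2\pi}\int_{0}^{2\pi}\frac{d\theta}{|1-tre^{i\theta}|^{2q}}\right)^{1/q}\asymp(1-tr)^{\frac{1}{q}-2}$$
(valid since $q>1$), both resulting terms equal $(1-tr)^{\frac{1}{q}-2-\frac{1}{p}}$, so $M_{q}(r,(\Cu f)')\lesssim\ii(1-tr)^{-(2+\frac{1}{p}-\frac{1}{q})}\,d\mu(t)$. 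Since $2+\frac{1}{p}-\frac{1}{q}>1+\frac{1}{p}$ — again exactly because $q>1$ — the integral estimate against a $1+\frac{1}{p}$-Carleson measure gives $M_{q}(r,(\Cu f)')\lesssim(1-r)^{\frac{1}{q}-1}$, which together with the trivial control of $\Cu f(0)=\mu_{0}f(0)$ shows $\Cu f\in\Lambda^{q}_{\frac{1}{q}}$ with norm $\lesssim\|f\|_{\mathcal{B}^{1+\frac{1}{p}}}$. Given this, if $\mu$ is a $1+\frac{1}{p}$-Carleson measure and $X,Y$ are as in the statement, then $X\subseteq\mathcal{B}^{1+\frac{1}{p}}$ and $\Lambda^{q}_{\frac{1}{q}}\subseteq Y$ (both inclusions automatically continuous) yield boundedness of $\Cu:X\to Y$.

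For the converse in (1), suppose $\Cu:X\to Y$ is bounded; since $H^{p}\subseteq X$ and $Y\subseteq\mathcal{B}$, also $\Cu:H^{p}\to\mathcal{B}$ is bounded. I would test on $f_{a}(z)=\left(\frac{1-a^{2}}{(1-az)^{2}}\right)^{1/p}$, $a\in(0,1)$, for which $\|f_{a}\|_{H^{p}}\asymp 1$. Every term of the integrand of $(\Cu f_{a})'$ is positive at $z=a$, so restricting the $\mu$-integral to $[a,1)$ — where $1-a^{2}t\asymp 1-at\asymp 1-a$ — gives $|(\Cu f_{a})'(a)|\gtrsim(1-a)^{-2-\frac{1}{p}}\mu([a,1))$. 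The Bloch bound $(1-a^{2})|(\Cu f_{a})'(a)|\lesssim\|f_{a}\|_{H^{p}}\asymp 1$ then forces $\mu([a,1))\lesssim(1-a)^{1+\frac{1}{p}}$, i.e. $\mu$ is a $1+\frac{1}{p}$-Carleson measure. This completes part (1).

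For part (2), assume $\mu$ is a $1$-logarithmic $1+\frac{1}{p}$-Carleson measure; since $X\subseteq\mathcal{B}^{1+\frac{1}{p}}$ it suffices to show $\Cu:\mathcal{B}^{1+\frac{1}{p}}\to\Lambda^{1}_{1}$ is bounded, and by the description (1.2) it is enough to control $(1-r)M_{1}(r,(\Cu f)'')$. Proceeding as above with Minkowski's inequality applied to the formula for $(\Cu f)''$, the pointwise bounds on $f,f',f''$, and the $L^{1}$ Cauchy-kernel estimates $\frac{1}{2\pi}\int_{0}^{2\pi}|1-tre^{i\theta}|^{-1}d\theta\asymp\log\frac{e}{1-tr}$, $\frac{1}{2\pi}\int_{0}^{2\pi}|1-tre^{i\theta}|^{-2}d\theta\asymp(1-tr)^{-1}$, $\frac{1}{2\pi}\int_{0}^{2\pi}|1-tre^{i\theta}|^{-3}d\theta\asymp(1-tr)^{-2}$, one obtains
$$M_{1}(r,(\Cu f)'')\lesssim\ii\frac{\log\frac{e}{1-tr}}{(1-tr)^{2+\frac{1}{p}}}\,d\mu(t)+\ii\frac{d\mu(t)}{(1-tr)^{2+\frac{1}{p}}}.$$
The second integral is $\lesssim(1-r)^{-1}$ because $\mu$ is in particular a $1+\frac{1}{p}$-Carleson measure; for the first, the hypothesis $\mu([s,1))\log\frac{e}{1-s}\lesssim(1-s)^{1+\frac{1}{p}}$ is precisely what is needed to absorb the extra logarithm, a dyadic decomposition of $[0,1)$ about $t=r$ giving $\ii(1-tr)^{-2-\frac{1}{p}}\log\frac{e}{1-tr}\,d\mu(t)\lesssim(1-r)^{-1}$. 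Hence $(1-r)M_{1}(r,(\Cu f)'')\lesssim 1$ and $\Cu f\in\Lambda^{1}_{1}$ with norm $\lesssim\|f\|_{\mathcal{B}^{1+\frac{1}{p}}}$. I expect the heart of the difficulty to be the endpoint estimates in parts (1) and (2): the point is not to bound the integrand of $(\Cu f)^{(k)}$ pointwise all at once — that merely recovers the Bloch bound — but to retain the Cauchy kernels so that the circular $L^{q}$ (resp. $L^{1}$) averaging produces a genuine gain over the sup-norm, after which the exponent arithmetic closes exactly because $q>1$ (resp. because of the additional logarithmic decay of $\mu$).
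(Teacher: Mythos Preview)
Your proposal is correct and follows essentially the same route as the paper: reduce to the endpoint pair $\mathcal{B}^{1+\frac{1}{p}}\to\Lambda^{q}_{1/q}$ (resp.\ $\Lambda^{1}_{1}$), differentiate the integral representation of $\Cu f$, apply Minkowski together with the standard circular estimates for $|1-tre^{i\theta}|^{-s}$, and then invoke the $(1+\tfrac{1}{p})$-Carleson (resp.\ $1$-logarithmic $(1+\tfrac{1}{p})$-Carleson) integral bound---the paper packages the latter as its Proposition~3.1, which is exactly the dyadic decomposition you sketch. The necessity argument via the $H^{p}$ test functions $f_{a}$ and the Bloch bound on $(\Cu f_{a})'(a)$ is likewise the paper's method (it refers back to Theorem~1.1 and uses the equivalent family $f_{a}(z)=(1-a)(1-az)^{-1-1/p}$).
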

 If $1\leq p<\infty$, we   know that  $\Cu:BMOA\rightarrow \Lambda^{p}_{\frac{1}{p}}$ if and only if   $\mu$ is a $1$-logarithmic $1$-Carleson measure. Theorem 1.2 includes a characterization of those $\mu$ so that $\Cu$ maps  $L^{2,\lambda}$ into  $\Lambda^{p}_{\frac{1}{p}}$.

In \cite{gla1}, the authors proved that if  $X$ and $Y $ are spaces of holomorphic functions in the unit disc $\dd$, such that
$\Lambda^{2}_{\frac{1}{2}}\subseteq X,Y\subseteq \mathcal {B}$,
then $\Cu$ is a bounded operator from the space $X$ into the space $Y$ if and only if $\mu$ is a
$1$-logarithmic $1$-Carleson measure. Since $\Lambda^{2}_{\frac{1}{2}}\subseteq BMOA= L^{2,1}\subseteq \mathcal {B}$, so that $\Cu$ is a bounded operator from $X$ into the space $L^{2,1}$ if and only if $\mu$ is a $1$-logarithmic $1$-Carleson measure. 
  It is natural to  ask what's the condition for $\mu$ such that $\Cu$  is bounded from $X$ into $L^{2,\lambda}$? On the other hand, whether the  space $\Lambda^{2}_{\frac{1}{2}}$ can be extended to the space $\Lambda^{1}_{1}$?
We are now ready to state our next results, which generalized the previous mentioned results. Our results  also gives the range of $X$ under the action of $\Cu$ when $\mu$ is a $1$-logarithmic $s$-Carleson
measure with  $\frac{1}{2}< s<1$.
\begin{theorem}
Suppose   $0<\lambda\leq 1$ and  $\mu$ is a  finite positive Borel measure on the interval $[0,1)$. Let $ X$ and $Y$ be subspaces
of  $H(\dd)$ such that $\Lambda^{1}_{1}\subseteq X\subseteq \mathcal {B}$ and $L^{2,\lambda}\subset Y\subset \mathcal {B}^{\frac{3-\lambda}{2}}$. Then the following conditions are equivalent.
\\(1)\ The operator $\Cu$ is  bounded from $X$ into $Y$.
\\ (2)\ The measure $\mu$ is a $1$-logarithmic $\frac{1+\lambda}{2}$-Carleson measure.
 \end{theorem}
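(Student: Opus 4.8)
The plan is to exploit the sandwich structure of the spaces. Since every inclusion appearing in the hypotheses is continuous, $(2)\Rightarrow(1)$ reduces to showing that $\Cu$ maps $\mathcal{B}$ boundedly into $L^{2,\lambda}$ whenever $\mu$ is a $1$-logarithmic $\tfrac{1+\lambda}{2}$-Carleson measure, while $(1)\Rightarrow(2)$ reduces to showing that boundedness of $\Cu\colon\Lambda^{1}_{1}\to\mathcal{B}^{\frac{3-\lambda}{2}}$ forces $\mu$ to be a $1$-logarithmic $\tfrac{1+\lambda}{2}$-Carleson measure; recall the latter condition means $\mu([s,1))\lesssim(1-s)^{\frac{1+\lambda}{2}}\bigl(\log\tfrac{e}{1-s}\bigr)^{-1}$.

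For $(1)\Rightarrow(2)$ I would test on the single function $f_{0}(z)=\log\tfrac{e}{1-z}$. It lies in $\Lambda^{1}_{1}$: by (1.2), $f_{0}''(z)=(1-z)^{-2}$, so $M_{1}(r,f_{0}'')=\tfrac{1}{2\pi}\int_{0}^{2\pi}|1-re^{i\theta}|^{-2}\,d\theta=(1-r^{2})^{-1}$ and $(1-r)M_{1}(r,f_{0}'')$ is bounded. By (1.3), $\Cu f_{0}(z)=\int_{0}^{1}\frac{\log\frac{e}{1-tz}}{1-tz}\,d\mu(t)$, hence $(\Cu f_{0})'(z)=\int_{0}^{1}\frac{t\log\frac{e^{2}}{1-tz}}{(1-tz)^{2}}\,d\mu(t)$. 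Since $\Cu f_{0}\in Y\subseteq\mathcal{B}^{\frac{3-\lambda}{2}}$ we get $(1-r)^{\frac{3-\lambda}{2}}|(\Cu f_{0})'(r)|\lesssim 1$ for $0\le r<1$. Every factor in the last integral is positive, so keeping only $t\in[r,1)$ and using that $t\asymp1$ and $1-tr\asymp1-r$ on that interval (for $\tfrac12\le r<1$) gives
$$\frac{\log\frac{e}{1-r}}{(1-r)^{2}}\,\mu([r,1))\ \lesssim\ (\Cu f_{0})'(r)\ \lesssim\ (1-r)^{-\frac{3-\lambda}{2}},$$
that is, $\mu([r,1))\lesssim(1-r)^{\frac{1+\lambda}{2}}\bigl(\log\tfrac{e}{1-r}\bigr)^{-1}$ for $\tfrac12\le r<1$; since the range $0\le r<\tfrac12$ is trivial, $\mu$ is a $1$-logarithmic $\tfrac{1+\lambda}{2}$-Carleson measure.

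For $(2)\Rightarrow(1)$, fix $f\in\mathcal{B}$. The contribution of $\Cu f(0)=\mu_{0}f(0)$ is harmless, and the rest is controlled through the equivalent norm $\|\Cu f\|_{L^{2,\lambda}}^{2}\asymp|\Cu f(0)|^{2}+\sup_{w\in\dd}(1-|w|^{2})^{2-\lambda}\int_{\dd}|(\Cu f)'(z)|^{2}\,d\nu_{w}(z)$, with $d\nu_{w}(z)=\frac{1-|z|^{2}}{|1-\overline{w}z|^{2}}\,dA(z)$. Write $(\Cu f)'(z)=\int_{0}^{1}h_{t}(z)\,d\mu(t)$ where $h_{t}(z)=\frac{tf'(tz)}{1-tz}+\frac{tf(tz)}{(1-tz)^{2}}$; then Minkowski's integral inequality in $L^{2}(d\nu_{w})$ gives
$$\Bigl(\int_{\dd}|(\Cu f)'(z)|^{2}\,d\nu_{w}(z)\Bigr)^{1/2}\le\int_{0}^{1}\Bigl(\int_{\dd}|h_{t}(z)|^{2}\,d\nu_{w}(z)\Bigr)^{1/2}d\mu(t).$$
The Bloch estimates $|f'(tz)|\lesssim\|f\|_{\mathcal{B}}(1-t|z|)^{-1}$ and $|f(tz)|\lesssim\|f\|_{\mathcal{B}}\log\tfrac{e}{1-t|z|}$ yield $|h_{t}(z)|\lesssim\|f\|_{\mathcal{B}}\,\frac{\log\frac{e}{1-t|z|}}{(1-t|z|)\,|1-tz|}$, so everything comes down to estimating
$$J(t,w)=\int_{\dd}\frac{\bigl(\log\frac{e}{1-t|z|}\bigr)^{2}(1-|z|^{2})}{(1-t|z|)^{2}\,|1-tz|^{2}\,|1-\overline{w}z|^{2}}\,dA(z)$$
and proving $(1-|w|^{2})^{2-\lambda}\bigl(\int_{0}^{1}J(t,w)^{1/2}\,d\mu(t)\bigr)^{2}\lesssim1$ uniformly in $w$. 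Using $1-t|z|\asymp\max\{1-t,1-|z|\}$ one splits the $z$-integral over $\{|z|\le t\}$ and $\{|z|>t\}$ and reduces $J(t,w)$ to logarithmic multiples of standard two-kernel integrals such as $\int_{\dd}\frac{(1-|z|^{2})\,dA(z)}{|1-tz|^{2}|1-\overline{w}z|^{2}}\asymp|1-tw|^{-1}$; inserting these and integrating against $\mu$ by means of $\int_{0}^{1}|1-tz|^{-\beta}\,d\mu(t)\lesssim(1-|z|)^{\frac{1+\lambda}{2}-\beta}\bigl(\log\tfrac{e}{1-|z|}\bigr)^{-1}$ (valid for $\beta>\tfrac{1+\lambda}{2}$) should close the estimate.

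The crux is this last step. A pointwise bound will not suffice: it only yields $|(\Cu f)'(z)|\lesssim\|f\|_{\mathcal{B}}(1-|z|)^{-\frac{3-\lambda}{2}}$, for which the area integral defining the $L^{2,\lambda}$-norm diverges for every $\lambda\in(0,1]$. One genuinely has to keep the kernel $|1-\overline{w}z|^{-1}$ coupled with $|1-tz|^{-1}$ and use the extra $1/\log$ decay encoded in the $1$-logarithmic (as opposed to plain) Carleson condition; the bookkeeping is routine in spirit but delicate, and the endpoint $\lambda=1$ (where $Y\supseteq BMOA$) is the tightest case.
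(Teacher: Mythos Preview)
Your overall strategy coincides with the paper's: test with $\log\frac{1}{1-z}$ for $(1)\Rightarrow(2)$, and for $(2)\Rightarrow(1)$ pass to $\Cu:\mathcal B\to L^{2,\lambda}$, apply Minkowski in the $L^{2,\lambda}$ area integral, invoke a two–point Forelli--Rudin estimate (the paper's Lemma~\ref{lm2.4}), and close with the logarithmic Carleson characterisation (Proposition~\ref{pro3.1}). The necessity argument is essentially identical; you use the integral form of $(\Cu f_0)'$ at real points while the paper uses the Taylor moments and Lemma~\ref{lm2.2}, but both produce $\mu_n\lesssim n^{-\frac{1+\lambda}{2}}(\log n)^{-1}$.

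Where your sketch diverges from the paper is in the organisation of $(2)\Rightarrow(1)$, and here there is a gap. You merge the two contributions of $h_t(z)$ into the single bound $|h_t(z)|\lesssim\|f\|_{\mathcal B}\,\dfrac{\log\frac{e}{1-t|z|}}{(1-t|z|)\,|1-tz|}$ and then propose to split $J(t,w)$ according to $|z|\lessgtr t$, reducing to the model integral $\int_{\dd}\frac{(1-|z|^2)}{|1-tz|^2|1-\overline w z|^2}\,dA(z)\asymp|1-tw|^{-1}$. That reduction does not go through: on $\{|z|>t\}$ one pulls out $(1-t)^{-2}(\log\frac{e}{1-t})^{2}$, which after the square root leaves $(1-t)^{-1}\log\frac{e}{1-t}$ in the $\mu$-integral; this corresponds to $q=1\ge s=\frac{1+\lambda}{2}$ in Proposition~\ref{pro3.1}, so the proposition does not apply, and in fact for the extremal measure $d\mu(t)=(1-t)^{\frac{\lambda-1}{2}}(\log\frac{e}{1-t})^{-1}dt$ the resulting integral diverges. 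On $\{|z|\le t\}$ the substitution $1-t|z|\asymp 1-|z|$ produces an area integral with weight $(1-|z|)^{-1}$, outside the range of Lemma~\ref{lm2.4}. So the single combined bound is too crude to close via the split you suggest.

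The paper avoids this by keeping the two pieces separate: $E_1$ carries $\frac{1}{(1-t|z|)\,|1-tz|}$ (no logarithm) and is handled exactly as in Theorem~1.1 via the elementary inequality $(1-t|z|)^{-2}\le(1-t)^{-(2-\sigma)}(1-|z|)^{-\sigma}$ with $1-\lambda<\sigma<1$, followed by Lemma~\ref{lm2.4}(1); $E_2$ carries $\frac{\log\frac{e}{1-t|z|}}{|1-tz|^{2}}$ (no radial singularity), and after the harmless replacement $\log\frac{e}{1-t|z|}\le\log\frac{e}{1-|z|}$ one applies Lemma~\ref{lm2.4}(2) directly, obtaining $\big(\int_{\dd}\cdots\big)^{1/2}\asymp\dfrac{\log\frac{e}{1-t}}{(1-t)^{1/2}|1-tw|}$, for which Proposition~\ref{pro3.1}(4) with $q=\frac12<\frac{1+\lambda}{2}$ closes the estimate. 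If you insist on your merged bound, the same $\sigma$-trick applied globally to $(1-t|z|)^{-2}$ (rather than the $|z|\lessgtr t$ split) does rescue the argument, but the two-term decomposition is cleaner and is what the paper does.
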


 \begin{theorem}
Suppose   $\mu$ is a  finite positive Borel measure on the interval $[0,1)$. Let $ X$ and $Y$ be subspaces
of  $H(\dd)$ such that $\Lambda^{1}_{1}\subseteq X,Y\subseteq \mathcal {B}$. Then the following conditions are equivalent.
\\(1)\ The operator $\Cu$ is  bounded from $X$ into  $Y$.
\\(2)\ The measure   $\mu$ is a $1$-logarithmic $1$-Carleson measure.
 \end{theorem}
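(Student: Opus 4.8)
The plan is to reduce the equivalence to two endpoint statements and then exploit the chain of continuous inclusions among the spaces. Concretely, I would prove (a): if $\mu$ is a $1$-logarithmic $1$-Carleson measure then $\Cu\colon\mathcal B\to\Lambda^1_1$ is bounded; and (b): if $\Cu\colon\Lambda^1_1\to\mathcal B$ is bounded then $\mu$ is a $1$-logarithmic $1$-Carleson measure. Granting (a) and (b), both implications follow at once: for $(2)\Rightarrow(1)$ factor $\Cu$ through $X\hookrightarrow\mathcal B\xrightarrow{\ \Cu\ }\Lambda^1_1\hookrightarrow Y$, and for $(1)\Rightarrow(2)$ through $\Lambda^1_1\hookrightarrow X\xrightarrow{\ \Cu\ }Y\hookrightarrow\mathcal B$, each inclusion being continuous. (One can also deduce $(1)\Rightarrow(2)$ from Theorem 1.3 with $\lambda=1$ and target space $\mathcal B$, but the self-contained test-function argument in (b) below is just as short.)

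For (a) I would use the description (1.2) of $\Lambda^1_1$, so it suffices to bound $\sup_{0\le r<1}(1-r)M_1(r,(\Cu f)'')$ by a constant multiple of $\|f\|_{\mathcal B}$. Differentiating (1.3) twice under the integral sign gives
$$(\Cu f)''(z)=\int_0^1\left[\frac{t^2f''(tz)}{1-tz}+\frac{2t^2f'(tz)}{(1-tz)^2}+\frac{2t^2f(tz)}{(1-tz)^3}\right]d\mu(t).$$
Applying the Bloch bounds $|f(z)|\lesssim\|f\|_{\mathcal B}\log\frac{2}{1-|z|}$, $|f'(z)|\lesssim\|f\|_{\mathcal B}(1-|z|)^{-1}$, $|f''(z)|\lesssim\|f\|_{\mathcal B}(1-|z|)^{-2}$, then integrating in $\theta$ over $|z|=r$ (Fubini; every term is positive) and invoking the classical asymptotics $\frac{1}{2\pi}\int_0^{2\pi}|1-\rho e^{i\theta}|^{-s}\,d\theta\asymp(1-\rho)^{1-s}$ for $s>1$ and $\asymp\log\frac{e}{1-\rho}$ for $s=1$, the three terms all collapse into
$$M_1(r,(\Cu f)'')\ \lesssim\ \|f\|_{\mathcal B}\int_0^1\frac{\log\frac{e}{1-tr}}{(1-tr)^2}\,d\mu(t),$$
and it remains to see that the last integral is $\lesssim(1-r)^{-1}$.

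Establishing this scalar inequality is the crux of the argument. I would integrate by parts against $-d\!\left(\mu([t,1))\right)$, bound $\mu([t,1))\lesssim(1-t)/\log\frac{e}{1-t}$ using the $1$-logarithmic $1$-Carleson hypothesis, use the elementary monotonicities $1-tr\ge\max\{1-t,1-r\}$ and $\log\frac{e}{1-tr}\le\log\frac{e}{1-t}$ so that the two logarithmic factors cancel, and finish with the exact evaluation $\int_0^1(1-t)(1-tr)^{-3}\,dt=\tfrac12(1-r)^{-1}$. The delicate point is precisely this cancellation: the logarithmic \emph{growth} coming from the Bloch estimate on $f$ must be absorbed by the logarithmic \emph{decay} built into $\mu$, and if handled carelessly the two logarithms compound and the estimate breaks — this is exactly why $\Lambda^1_1$ is the correct target here and why the plain $1$-Carleson condition does not suffice.

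For (b) I would test $\Cu$ on $f_b(z)=\log\frac{2}{1-bz}$, $b\in[\tfrac12,1)$. A one-line computation gives $M_1(r,f_b'')\asymp b^2/(1-br)$, hence (together with $|f_b(0)|=\log 2$, $|f_b'(0)|=b\le1$) $\sup_b\|f_b\|_{\Lambda^1_1}<\infty$, so $\Cu f_b\in\mathcal B$ with $\|\Cu f_b\|_{\mathcal B}\lesssim\|\Cu\|$. Evaluating $(\Cu f_b)'$ at $z=b$, discarding the nonnegative term produced by $f_b'$, and keeping only $t\in[b,1)$ (where $1-tb\le2(1-b)$ and $1-b^2t\le3(1-b)$) yields
$$\|\Cu\|\ \gtrsim\ (1-b^2)\,|(\Cu f_b)'(b)|\ \gtrsim\ \frac{\log\frac{e}{1-b}}{1-b}\,\mu([b,1)),$$
i.e. $\log\frac{e}{1-b}\,\mu([b,1))\lesssim(1-b)\|\Cu\|$ for $b\ge\tfrac12$; the range $b<\tfrac12$ is trivial since $\mu$ is finite, so $\mu$ is a $1$-logarithmic $1$-Carleson measure, which completes (b) and hence the theorem.
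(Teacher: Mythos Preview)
Your proof is correct and follows the same overall architecture as the paper: reduce to the endpoint cases $\Cu:\mathcal B\to\Lambda^1_1$ (for $(2)\Rightarrow(1)$) and $\Cu:\Lambda^1_1\to\mathcal B$ (for $(1)\Rightarrow(2)$), then for the former differentiate (1.3) twice, apply the Bloch growth estimates from Lemma~\ref{lm2.1}, and integrate in $\theta$ via Lemma~\ref{lm2.5}.

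The execution differs in two places. First, for the key scalar inequality $\int_0^1\log\frac{e}{1-tr}\,(1-tr)^{-2}\,d\mu(t)\lesssim(1-r)^{-1}$, the paper invokes its general Proposition~\ref{pro3.1} (a dyadic-shell characterization of $\gamma$-logarithmic $s$-Carleson measures), whereas you give a direct integration-by-parts argument against the distribution function $\mu([t,1))$ and use the cancellation $\log\frac{e}{1-tr}\le\log\frac{e}{1-t}$ together with the exact value $\int_0^1(1-t)(1-tr)^{-3}dt=\tfrac12(1-r)^{-1}$. Your route is more elementary and self-contained for this particular theorem; the paper's route reuses machinery already set up for Theorems~1.1--1.3. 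Second, for necessity the paper tests on the single function $g(z)=\log\frac{1}{1-z}\in\Lambda^1_1$ and reads off the moment bound $\mu_K\lesssim K^{-1}(\log K)^{-1}$ via the coefficient criterion of Lemma~\ref{lm2.2}, while you test on the family $f_b(z)=\log\frac{2}{1-bz}$ and evaluate $(\Cu f_b)'(b)$ directly. Both test-function arguments are standard and of comparable length; yours avoids the coefficient lemma, the paper's avoids verifying the uniform $\Lambda^1_1$ bound on a family.
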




 The boundedness  of the operator $\Cu$ acting on $BMOA$ has been studied in \cite{gla1,bao,blas}. The space of $BMOA$ is close related to the Morrey space $L^{2,\lambda}$.
Since the  Moreey space   $L^{2,\lambda}$ has showed up in a natural way in our work, it seems natural to study the action of the
operators $\Cu$ on the Moreey space   $L^{2,\lambda}$  for
general values of the parameters $\lambda$.
The following result gives a complete characterization of the boundedness  of $\Cu$ act between different Morrey spaces.
  Note that  the case  of  $\lambda_{1}=1$  is contained in Theorem 1.3.
\begin{theorem}
Suppose   $0<\lambda_{1}< 1$, $0<\lambda_{2}\leq 1$,  $\mu$ is a  finite positive Borel measure on the interval $[0,1)$.
Let $ X$ and $Y$ be subspaces of  $H(\dd)$ such that $L^{2,\lambda_{1}}\subseteq X\subseteq\mathcal {B}^{\frac{3-\lambda_{1}}{2}}$ and  $L^{2,\lambda_{2}}\subseteq Y\subseteq\mathcal {B}^{\frac{3-\lambda_{2}}{2}}$. Then the following statements are equivalent.
\\(1)\  The operator  $\Cu$ is bounded from  $X$ into  $Y$.
\\(2)\ The measure  $\mu$ is a  $1+\frac{\lambda_{2}-\lambda_{1}}{2}$-Carleson measure.
 \end{theorem}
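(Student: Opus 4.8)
Write $\beta:=1+\frac{\lambda_2-\lambda_1}{2}$, $\alpha_1:=\frac{3-\lambda_1}{2}\in(1,\tfrac32)$ and $\alpha_2:=\frac{3-\lambda_2}{2}$. The plan is to prove the two implications separately, in each case replacing $X$ and $Y$ by the extreme spaces allowed by the hypotheses and using that all the inclusions in sight are continuous.

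\emph{Sufficiency $(2)\Rightarrow(1)$.} Since $X\subseteq\mathcal{B}^{\alpha_1}$ and $L^{2,\lambda_2}\subseteq Y$, it suffices to show that a $\beta$-Carleson measure $\mu$ makes $\Cu\colon\mathcal{B}^{\alpha_1}\to L^{2,\lambda_2}$ bounded. I would work with the equivalent norm on $L^{2,\lambda_2}$ quoted in the introduction. The constant term is harmless since $\Cu f(0)=\mu_0 f(0)$ with $\mu$ finite, so only the gradient term counts; differentiating under the integral sign gives $(\Cu f)'(z)=\ii\big(\frac{tf'(tz)}{1-tz}+\frac{tf(tz)}{(1-tz)^{2}}\big)\,d\mu(t)$. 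Inserting the growth bounds $|f(tz)|\lesssim\|f\|_{\mathcal{B}^{\alpha_1}}(1-t|z|)^{1-\alpha_1}$ and $|f'(tz)|\lesssim\|f\|_{\mathcal{B}^{\alpha_1}}(1-t|z|)^{-\alpha_1}$ (legitimate because $\alpha_1>1$), and using $|1-tz|\ge 1-t|z|$, both summands are dominated by $(1-t|z|)^{-\alpha_1}|1-tz|^{-1}$, so $|(\Cu f)'(z)|\lesssim\|f\|_{\mathcal{B}^{\alpha_1}}\ii(1-t|z|)^{-\alpha_1}|1-tz|^{-1}\,d\mu(t)$. Squaring by Cauchy--Schwarz against the finite measure $\mu$ and applying Fubini, boundedness reduces to the uniform estimate
$$\sup_{w\in\dd}(1-|w|^{2})^{2-\lambda_2}\ii\Big(\int_{\dd}\frac{(1-t|z|)^{-2\alpha_1}(1-|z|^{2})}{|1-tz|^{2}\,|1-\bar w z|^{2}}\,dA(z)\Big)\,d\mu(t)<\infty.$$
I would estimate the inner $z$-integral by a Forelli--Rudin type bound and then integrate in $t$ using that $\mu$ is a $\beta$-Carleson measure; equivalently one checks that $|(\Cu f)'(z)|^{2}(1-|z|^{2})\,dA(z)$ is a $\lambda_2$-Carleson measure on $\dd$. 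Since only the Bloch-type growth of the input enters, this is precisely the computation behind the sufficiency half of Theorem~1.1, specialised to $p=\frac{2}{1-\lambda_1}$ (so $\frac1p=\frac{1-\lambda_1}{2}$, $1+\frac1p=\alpha_1$) and $\lambda=\lambda_2$, and indeed $\frac{1+\lambda_2}{2}+\frac1p=\beta$.

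\emph{Necessity $(1)\Rightarrow(2)$.} Boundedness of $\Cu\colon X\to Y$ together with $L^{2,\lambda_1}\subseteq X$ and $Y\subseteq\mathcal{B}^{\alpha_2}$ forces $\Cu\colon L^{2,\lambda_1}\to\mathcal{B}^{\alpha_2}$ to be bounded. I would test on $f_a(z)=(1-a)^{(1+\lambda_1)/2}(1-az)^{-1}$ for $a\in[\tfrac12,1)$: because $\|(1-az)^{-1}\|_{H^{2/(1-\lambda_1)}}\asymp(1-a)^{(1-\lambda_1)/2-1}$ and $H^{2/(1-\lambda_1)}\subseteq L^{2,\lambda_1}$, one has $\|f_a\|_{L^{2,\lambda_1}}\lesssim1$, hence $\|\Cu f_a\|_{\mathcal{B}^{\alpha_2}}\lesssim1$. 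At $z=a\in(0,1)$ everything is real and nonnegative, and keeping only $t\in[a,1)$, where $1-a^{2}t\asymp1-at\asymp1-a$, gives $(\Cu f_a)'(a)\gtrsim(1-a)^{(1+\lambda_1)/2}\,\mu([a,1))(1-a)^{-3}$, while $\Cu f_a\in\mathcal{B}^{\alpha_2}$ forces $(\Cu f_a)'(a)\lesssim(1-a)^{-\alpha_2}$. Comparing the two and simplifying the exponents yields $\mu([a,1))\lesssim(1-a)^{\beta}$ for $a$ near $1$, and with $\mu([0,1))<\infty$ this says exactly that $\mu$ is a $\beta$-Carleson measure.

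\emph{Main obstacle.} The real work is the inner $z$-integral in the sufficiency step. One cannot bound $(1-t|z|)^{-2\alpha_1}$ by $(1-t)^{-2\alpha_1}$, since $2\alpha_1=3-\lambda_1>2>\beta$ makes $\ii(1-t)^{-2\alpha_1}\,d\mu(t)$ diverge; the radial factor $1-t|z|$ must be carried through the integration over $\dd$ essentially intact, the angular integration used to create the decay that absorbs it, and only afterwards should the residual singularity be split according to which of $1-t$, $1-|w|$ dominates. Getting the exponents to balance there is the delicate point; the subsequent bookkeeping with the Carleson condition on $\mu$ is routine.
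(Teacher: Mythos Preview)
Your necessity argument is correct; it differs from the paper's (which tests on the single function $f(z)=(1-z)^{-(1-\lambda_1)/2}$ via Lemma~\ref{lm2.6} and reads off $\mu_n\lesssim n^{-\beta}$ from the coefficient characterisation of $\mathcal{B}^{\alpha_2}$ in Lemma~\ref{lm2.2}), but your pointwise test with the family $f_a$ works just as well and is the analogue of the necessity proof in Theorem~1.1.

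The sufficiency step, however, has a genuine gap: ``squaring by Cauchy--Schwarz against the finite measure $\mu$'' costs exactly a factor of two in the Carleson exponent. Cauchy--Schwarz gives $\big(\ii g(t)\,d\mu(t)\big)^{2}\le\mu([0,1))\ii g(t)^{2}\,d\mu(t)$, so one copy of $\mu$ is wasted as the harmless constant $\mu([0,1))$ and the kernel in your displayed integral is the \emph{square} of the one Theorem~1.1 handles. Carrying the estimate through (split $(1-t|z|)^{-2\alpha_1}\le(1-t)^{-(2\alpha_1-\sigma)}(1-|z|)^{-\sigma}$ with $1-\lambda_2<\sigma<1$, apply Lemma~\ref{lm2.4}, then Proposition~\ref{pro3.1}) one is left with
\[
\sup_{w\in\dd}\ii\frac{(1-|w|)^{2-\lambda_2}}{(1-t)^{2\alpha_1-\sigma}|1-tw|^{1+\sigma}}\,d\mu(t),
\]
and Proposition~\ref{pro3.1} says this is finite exactly when $\mu$ is a $2\beta$-Carleson measure, not a $\beta$-Carleson measure. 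The computation in Theorem~1.1 you invoke does \emph{not} use Cauchy--Schwarz; it uses Minkowski's inequality to pull the $\mu$-integral outside the $L^{2}(dA)$-norm, keeping the square root and producing
\[
\ii\Big(\int_{\dd}\frac{(1-|z|^{2})\,dA(z)}{(1-t|z|)^{2\alpha_1}|1-tz|^{2}|1-\bar w z|^{2}}\Big)^{1/2}d\mu(t),
\]
where the exponent $\tfrac12$ halves everything back so that a $\beta$-Carleson measure suffices via Proposition~\ref{pro3.1}. Replace Cauchy--Schwarz by Minkowski and your sketch coincides with the paper's argument.
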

 In section 2,  we shall give some  basic results that will be used in the proof.
 Section 3 will be devoted to present the proofs of Theorem 1.1--Theorem 1.5 and gives some relevant corollaries.
  It is necessary  to  clarify that the subspaces $X$ and $Y$ of $H(\dd)$ we shall
be dealing with are Banach spaces continuously embedded in $H(\dd)$, to prove that
the operator $\Cu$ is bounded from $X $ into $Y$ it suffices to show that
it maps $X$ into $Y$ by using the closed graph theorem.

Throughout the paper, the letter  $C$ will denote a positive constant which depends only upon the
displayed parameters (which sometimes will be omitted) but not necessarily the same at different occurrences. Furthermore, we  will use
the notation $Q_1\lesssim Q_{2}$  if there exists a constant $C$ such that $Q_1\leq C Q_{2}$, and $Q_1\gtrsim Q_{2}$ is
understood in an analogous manner. In particular, if  $Q_1\lesssim Q_{2}$  and $Q_1\gtrsim Q_{2}$, then we write $Q_1\asymp Q_{2}$ and say that $Q_{1}$ and $Q_{2}$ are equivalent. This notation has already been used above in the introduction.

 \section{Preliminary Results} \label{prelim}
\begin{lemma}\label{lm2.1}
Let  $0<\alpha<\infty$  and $f\in \ba$. Then for each $z\in \dd$, we have the following
inequalities:
\begin{equation*}
|f(z)|\lesssim
\begin{cases}
||f||_{\ba},  \text{ if  $0<\alpha<1;$}\\
||f||_{\ba}\log\frac{2}{1-|z|},  \text{ if  $\alpha=1;$ }\\
\frac{||f||_{\ba}}{(1-|z|)^{\alpha-1}}, \text{ if $\alpha>1.$}
\end{cases}
\end{equation*}
\end{lemma}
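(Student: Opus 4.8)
The plan is to recover $f$ from its derivative by integrating along the radial segment joining $0$ to $z$, and then to feed in the pointwise bound on $f'$ that comes directly from membership in $\mathcal{B}^{\alpha}$. Concretely, the definition of the norm gives, for every $w\in\mathbb{D}$, the estimate $(1-|w|^{2})^{\alpha}|f'(w)|\le\|f\|_{\mathcal{B}^{\alpha}}$, that is $|f'(w)|\le\|f\|_{\mathcal{B}^{\alpha}}(1-|w|^{2})^{-\alpha}$. First I would write $f(z)=f(0)+z\int_{0}^{1}f'(tz)\,dt$ by applying the fundamental theorem of calculus to $t\mapsto f(tz)$, so that, with $r=|z|$,
\[
|f(z)|\le|f(0)|+r\int_{0}^{1}\frac{\|f\|_{\mathcal{B}^{\alpha}}}{(1-t^{2}r^{2})^{\alpha}}\,dt.
\]
Using the elementary inequality $1-t^{2}r^{2}=(1-tr)(1+tr)\ge 1-tr$, the whole problem is reduced to estimating the single real integral $\int_{0}^{1}(1-tr)^{-\alpha}\,dt$ uniformly in $r\in[0,1)$.

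The key step is the exact evaluation of this integral by the substitution $u=1-tr$, which turns it into $\tfrac{1}{r}\int_{1-r}^{1}u^{-\alpha}\,du$; the factor $r$ sitting outside then cancels the $\tfrac{1}{r}$, so that $r\int_{0}^{1}(1-tr)^{-\alpha}\,dt$ is controlled by a clean antiderivative evaluated on $[1-r,1]$. Here the three regimes of $\alpha$ separate naturally. For $0<\alpha<1$ the antiderivative $u^{1-\alpha}$ stays bounded, giving $r\int_{0}^{1}(1-tr)^{-\alpha}\,dt\le\frac{1}{1-\alpha}\bigl(1-(1-r)^{1-\alpha}\bigr)\le\frac{1}{1-\alpha}$, a constant independent of $z$. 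For $\alpha=1$ the antiderivative is logarithmic, producing $r\int_{0}^{1}(1-tr)^{-1}\,dt=\log\frac{1}{1-r}\le\log\frac{2}{1-r}$. For $\alpha>1$ the antiderivative blows up like $u^{1-\alpha}$, yielding $r\int_{0}^{1}(1-tr)^{-\alpha}\,dt\le\frac{1}{\alpha-1}\bigl((1-r)^{1-\alpha}-1\bigr)\le\frac{1}{(\alpha-1)(1-r)^{\alpha-1}}$.

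Finally I would reassemble the three cases, absorbing the $|f(0)|$ term into the claimed bound in each regime: for $0<\alpha<1$ simply because $|f(0)|\le\|f\|_{\mathcal{B}^{\alpha}}$; for $\alpha=1$ because $\log\frac{2}{1-r}\ge\log 2$, whence $|f(0)|\lesssim\|f\|_{\mathcal{B}}\log\frac{2}{1-r}$; and for $\alpha>1$ because $(1-r)^{\alpha-1}\le1$ forces $|f(0)|\le\|f\|_{\mathcal{B}^{\alpha}}(1-r)^{-(\alpha-1)}$. There is no serious obstacle here, since the argument is entirely elementary; the only points demanding a little care are keeping the implicit constants uniform in $r$ (in particular near $r=0$, where the apparent $\tfrac{1}{r}$ singularity is harmless because the original integrand is bounded there, the cancellation being only a bookkeeping device), and checking that the crude replacement of $1-t^{2}r^{2}$ by $1-tr$ does not cost the sharp exponent, which it does not since it only enlarges the upper bound.
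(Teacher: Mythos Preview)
Your argument is correct: the radial integration $f(z)=f(0)+z\int_0^1 f'(tz)\,dt$ together with the defining bound $|f'(w)|\le\|f\|_{\mathcal{B}^\alpha}(1-|w|^2)^{-\alpha}$ reduces everything to the elementary integral $\int_{1-r}^{1}u^{-\alpha}\,du$, and your case analysis and absorption of $|f(0)|$ are all clean.

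As for comparison with the paper: there is nothing to compare, since the paper does not prove this lemma at all but simply cites it as a well-known fact from Zhu's book \cite{zhu}. Your proof is precisely the standard argument one finds there, so in effect you have supplied the details the paper chose to omit.
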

This well known Lemma can be found in \cite{zhu}.
\begin{lemma}\label{lm2.2}
Let  $\alpha>0$ and $f\in H(\mathbb{D})$, $f(z)=\sn \widehat{f}(n)z^{n}$, $\widehat{f}(n)\geq 0 $ for all $n\geq 0$. Then $f\in \mathcal {B}^{\alpha}$ if and only if
$$\sup_{n\geq 1}n^{-\alpha}\sum_{k=1}^{n}k\widehat{f}(k)<\infty.$$
\end{lemma}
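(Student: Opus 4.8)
The plan is to reduce the membership $f\in\mathcal{B}^{\alpha}$ to a growth estimate for $f'$ on the real segment $[0,1)$, and then translate that estimate into the stated condition on the partial sums by Abel summation together with a standard power series asymptotic. Since $\widehat{f}(n)\geq 0$ for all $n$, the coefficients of $f'(z)=\sum_{n=1}^{\infty}n\widehat{f}(n)z^{n-1}$ are non-negative, so for every $0\leq r<1$ one has $\sup_{|z|=r}|f'(z)|=f'(r)=\sum_{n=1}^{\infty}n\widehat{f}(n)r^{n-1}$, the supremum being attained at $z=r$. Because $(1-|z|^{2})^{\alpha}\asymp(1-|z|)^{\alpha}$, this shows
$$ f\in\mathcal{B}^{\alpha}\iff \sup_{0\leq r<1}(1-r)^{\alpha}\sum_{n=1}^{\infty}n\widehat{f}(n)r^{n-1}<\infty, $$
so it suffices to prove that the quantity on the right is comparable to $\sup_{n\geq 1}n^{-\alpha}\sum_{k=1}^{n}k\widehat{f}(k)$.

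For the necessity, suppose $(1-r)^{\alpha}f'(r)\leq M$ for all $r$. Fixing $n\geq 1$ and taking $r=1-\tfrac1n$, non-negativity of the terms and the uniform bound $(1-\tfrac1n)^{k-1}\geq(1-\tfrac1n)^{n-1}\geq\tfrac1e$ for $1\leq k\leq n$ give
$$ \frac1e\sum_{k=1}^{n}k\widehat{f}(k)\leq\sum_{k=1}^{n}k\widehat{f}(k)\Big(1-\tfrac1n\Big)^{k-1}\leq f'\Big(1-\tfrac1n\Big)\leq Mn^{\alpha}, $$
hence $\sup_{n\geq 1}n^{-\alpha}\sum_{k=1}^{n}k\widehat{f}(k)\leq eM$. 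For the sufficiency, set $S_{n}=\sum_{k=1}^{n}k\widehat{f}(k)$ (with $S_{0}=0$), so $n\widehat{f}(n)=S_{n}-S_{n-1}$ and $S_{n}\leq Mn^{\alpha}$ by hypothesis. Summation by parts yields, for $0\leq r<1$,
$$ \sum_{n=1}^{\infty}n\widehat{f}(n)r^{n-1}=(1-r)\sum_{n=1}^{\infty}S_{n}r^{n-1}\leq M(1-r)\sum_{n=1}^{\infty}n^{\alpha}r^{n-1}, $$
the boundary term vanishing since $S_{N}r^{N-1}\leq MN^{\alpha}r^{N-1}\to 0$. Invoking the standard estimate $\sum_{n=1}^{\infty}n^{\alpha}r^{n-1}\lesssim(1-r)^{-(\alpha+1)}$ as $r\to 1^{-}$, we get $(1-r)^{\alpha}f'(r)\lesssim M$ uniformly, so $f\in\mathcal{B}^{\alpha}$.

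There is no serious obstacle here; the argument is elementary and self-contained. The only points demanding a little care are the summation by parts in the sufficiency direction (and the vanishing of the boundary term), the uniform lower bound $(1-\tfrac1n)^{n-1}\geq 1/e$ used in the necessity direction (together with the harmless edge case $n=1$, $r=0$), and recalling the asymptotics $\sum_{n\geq 1}n^{\alpha}r^{n-1}\asymp(1-r)^{-(\alpha+1)}$, which follows from $\sum_{n\geq 0}\binom{n+\alpha}{n}r^{n}=(1-r)^{-(\alpha+1)}$ and $\binom{n+\alpha}{n}\asymp n^{\alpha}$.
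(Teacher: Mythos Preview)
Your argument is correct. The reduction to the radial growth of $f'$ via non-negativity of the coefficients, the choice $r=1-\tfrac1n$ together with the uniform lower bound $(1-\tfrac1n)^{n-1}\ge e^{-1}$ for the necessity, and the Abel summation plus the asymptotic $\sum_{n\ge 1}n^{\alpha}r^{n-1}\asymp(1-r)^{-(\alpha+1)}$ for the sufficiency are all valid and cleanly executed; the boundary term and the edge case $n=1$ are handled.

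As for comparison with the paper: the paper does not actually prove Lemma~\ref{lm2.2} but simply cites it from the literature (Corollary~3.2 in \cite{t} or Theorem~2.6 in \cite{Wulan1}). Your proof is therefore not merely an alternative route but a genuinely self-contained elementary argument that the paper does not supply. The benefit of your approach is that it avoids any external dependence and makes the equivalence completely transparent through Abel summation; the benefit of the paper's citation is brevity, since the result is already known in more general contexts.
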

This result follows from Corollary 3.2 in \cite{t} or Theorem 2.6 in \cite{Wulan1}.
\comment{
The following characterization of functions with nonnegative Taylor coefficients in
$L^{2,\lambda}$ is Theorem 26 in [].
\begin{lemma}Let $0<\lambda \leq 1$ and let $f(z)=\sum_{n=0}^\infty \hat{f}(n)z^n$ be an analytic function in $\dd$ with $ \hat{f}(n)\geq 0$ for all $n\geq 0$. Then
$f\in L^{2,\lambda}$ if and only if
$$
\sup_{0\leq r<1} \sum_{n=0}^\infty \frac{(1-r)^{2-\lambda}}{(n+1)^{2}}\left(\sum_{k=0}^n(k+1)\hat{f}(k+1)r^{n-k}\right)^2<\infty.
$$
In particular, if  $ \hat{f}(n)\geq 0$ for each $n\geq 0$ and \{$\hat{f}(n)$\}  non-increasing, then
$$f\in L^{2,\lambda}\ \mbox{if and only if}\ \hat{f}(k)\lesssim n^{-\frac{1+\lambda}{2}}. $$
\end{lemma}
For $0<\lambda<1$, the above Lemma and () show that the function $f(z)=(1-z)^{-1-\lambda}$ belongs to the Morrey space $L^{2,\lambda}$ and attained maximum growth.
}
\begin{lemma}\label{lm2.3}
Let $0<s<\infty$ and $\mu$ be a finite positive Borel measure on the interval $[0,1)$. Then the following statements hold:
\\ (1)\ $\mu$ is an $s$-Carleson measure if and only if $\mu_{n}= O(\frac{1}{n^{s}})$.
\\ (2)\  $\mu$ is a vanishing $s$-Carleson measure if and only if $\mu_{n}= o(\frac{1}{n^{s}})$.
\end{lemma}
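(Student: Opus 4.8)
The plan is to reduce everything to the real-variable description of the Carleson condition recorded in the introduction: for a measure $\mu$ on $[0,1)$ (taking $\beta=0$), being an $s$-Carleson measure means exactly that there is $M>0$ with $\mu([r,1))\le M(1-r)^{s}$ for all $0\le r<1$, and being vanishing $s$-Carleson means $\mu([r,1))=o((1-r)^{s})$ as $r\to1^{-}$. Writing $F(r)=\mu([r,1))$ for the (decreasing) tail, I would prove each equivalence by connecting $F$ with the moments $\mu_n=\int_{[0,1)}t^n\,d\mu(t)$ through integration by parts in one direction and a single pointwise estimate in the other.

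For the direction that $s$-Carleson implies $\mu_n=O(n^{-s})$, I would integrate by parts in the Stieltjes sense. Since $F(1^-)=0$ (as $\mu$ is finite and carried by $[0,1)$) and $t^nF(t)$ vanishes at $t=0$ for $n\ge1$, one obtains
$$\mu_n=\int_{[0,1)}t^n\,d\mu(t)=n\int_0^1 F(t)\,t^{n-1}\,dt.$$
Inserting the bound $F(t)\le M(1-t)^s$ yields
$$\mu_n\le Mn\int_0^1 t^{n-1}(1-t)^s\,dt = Mn\,B(n,s+1)=M\,\frac{\Gamma(n+1)\Gamma(s+1)}{\Gamma(n+s+1)},$$
and the classical asymptotic $\Gamma(n+1)/\Gamma(n+s+1)\sim n^{-s}$ gives $\mu_n\lesssim n^{-s}$. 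For the converse, that $\mu_n=O(n^{-s})$ implies $s$-Carleson, I would use the elementary lower bound
$$\mu_n=\int_{[0,1)}t^n\,d\mu(t)\ge\int_{[r,1)}t^n\,d\mu(t)\ge r^n\,\mu([r,1)),$$
so that $\mu([r,1))\le \mu_n\,r^{-n}$ for every $n$. The point is to tie $n$ to $r$: taking $n=\lfloor 1/(1-r)\rfloor$, for $r\in[1/2,1)$ one has $-n\log r\le n\,\frac{1-r}{r}\le 2$, hence $r^{-n}\le e^{2}$, while $n\ge \tfrac12(1-r)^{-1}$ gives $n^{-s}\le 2^s(1-r)^s$. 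With $\mu_n\le Cn^{-s}$ this produces $\mu([r,1))\lesssim(1-r)^s$ for $r$ near $1$; for $r\in[0,1/2]$ the bound $\mu([r,1))\le\mu_0\le 2^s\mu_0(1-r)^s$ is trivial, so $\mu$ is $s$-Carleson.

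Part (2) needs the same two mechanisms refined to limits. For the vanishing-to-$o$ direction, fix $\varepsilon>0$, pick $\delta$ with $F(t)\le\varepsilon(1-t)^s$ on $(1-\delta,1)$, and split $n\int_0^1 F\,t^{n-1}=n\int_0^{1-\delta}+n\int_{1-\delta}^1$; the first piece is $\le\mu_0(1-\delta)^n$, which decays exponentially and so is $o(n^{-s})$, while the second is $\le\varepsilon nB(n,s+1)\lesssim\varepsilon n^{-s}$, whence $\limsup n^s\mu_n\lesssim\varepsilon$ and, $\varepsilon$ being arbitrary, $\mu_n=o(n^{-s})$. For the reverse, the chain $\mu([r,1))(1-r)^{-s}\le n^s\mu_n\cdot r^{-n}(n(1-r))^{-s}$ with $n=\lfloor1/(1-r)\rfloor$ forces the left side to $0$ as $r\to1^-$, because $r^{-n}$ and $(n(1-r))^{-s}$ stay bounded while $n^s\mu_n\to0$. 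The only genuinely delicate point is justifying the integration-by-parts identity and the Beta-function asymptotic; once $\mu_n\asymp n\,B(n,s+1)\asymp n^{-s}$ is established under the sharp Carleson bound, everything else is bookkeeping together with the choice $n\sim(1-r)^{-1}$.
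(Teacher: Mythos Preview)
Your argument is correct and complete. The integration-by-parts identity $\mu_n=n\int_0^1 F(t)\,t^{n-1}\,dt$ (which you can obtain cleanly via Fubini from $t^n=n\int_0^1 s^{n-1}\mathbf{1}_{\{s<t\}}\,ds$), the Beta-function asymptotic, the choice $n=\lfloor 1/(1-r)\rfloor$, and the $\varepsilon$--splitting for the vanishing case are all handled properly.

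The paper itself does not prove this lemma; it simply records it as a citation to Theorem~2.1 and Theorem~2.4 of Bao--Wulan \cite{bao1}. So there is no ``paper's proof'' to compare with in any substantive sense. What you have written is a self-contained, elementary derivation that makes the paper's appeal to \cite{bao1} unnecessary for this statement, and it is exactly the kind of argument one would expect to find behind the cited result.
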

 This Lemma follows from Theorem 2.1  and Theorem 2.4 in \cite{bao1}.

 The following integral estimates are useful.  We only list the required ones. See \cite{z} for the detailed proofs and other cases.
\begin{lemma}\label{lm2.4}
Suppose that $r\geq 0,t\geq 0,\delta>-1,k \geq 0.$ Let
$$J_{w,a}=\int_{\dd}\frac{(1-|z|^{2})^{\delta}}{|1-z\overline{w}|^{t}|1-z\overline{a}|^{r}}\log^{k}\frac{e}{1-|z|^{2}}dA(z),\ \ w,a \in \dd.$$
(1)\ If  $t+r-\delta>2$, $t-\delta<2$ and $r-\delta<2$, then
$$J_{w,a} \asymp \frac{1}{|1-\langle w,a\rangle|^{t+r-\delta-2}}\log^{k}\frac{e}{|1-\langle w,a\rangle|}.$$
(2)\ If $t-\delta>2>r-\delta$, then
$$J_{w,a} \asymp \frac{1}{(1-|w|^{2})^{t-\delta-n-1}|1-\langle w,a\rangle|^{r}}\log^{k}\frac{e}{1-|w|^{2}}.$$
\end{lemma}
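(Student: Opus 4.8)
The plan is to treat both parts by the same mechanism: reduce the two--singularity integral to one--variable Forelli--Rudin estimates carrying a logarithmic weight, exploiting the elementary but crucial fact that $d(z,w)=|1-z\overline{w}|^{1/2}$ is a metric on $\dd$. Squaring its triangle inequality yields the comparison
$$|1-\langle w,a\rangle|\leq 2\left(|1-z\overline{w}|+|1-z\overline{a}|\right),\qquad z,w,a\in\dd,$$
together with the one--sided bounds $1-|w|^{2}\leq 2|1-z\overline{w}|$ and $1-|w|^{2}\leq 2|1-\langle w,a\rangle|$. I would first record, as an auxiliary one--variable lemma (derivable from the classical estimates in \cite{z} by absorbing the slowly varying factor $\log^{k}\frac{e}{1-|z|^{2}}$, which is essentially constant at the boundary scale), the three building blocks: for $0<\rho\leq 2$ with $\rho\gtrsim 1-|w|^{2}$,
$$\int_{\{|1-z\overline{w}|\leq\rho\}}\frac{(1-|z|^{2})^{\delta}}{|1-z\overline{w}|^{s}}\log^{k}\frac{e}{1-|z|^{2}}\,dA(z)\asymp\rho^{\delta+2-s}\log^{k}\frac{e}{\rho}\quad(s<\delta+2),$$
$$\int_{\{|1-z\overline{w}|\geq\rho\}}\frac{(1-|z|^{2})^{\delta}}{|1-z\overline{w}|^{s}}\log^{k}\frac{e}{1-|z|^{2}}\,dA(z)\asymp\rho^{\delta+2-s}\log^{k}\frac{e}{\rho}\quad(s>\delta+2),$$
and the full--disk version $\int_{\dd}\asymp(1-|w|^{2})^{\delta+2-s}\log^{k}\frac{e}{1-|w|^{2}}$ when $s>\delta+2$.

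The core of the argument is the decomposition $\dd=R_{1}\cup R_{2}\cup R_{3}$, where $R_{1}=\{|1-z\overline{w}|\leq\tfrac12|1-z\overline{a}|\}$ is the region near $w$, $R_{2}=\{|1-z\overline{a}|\leq\tfrac12|1-z\overline{w}|\}$ the region near $a$, and $R_{3}$ the remaining set on which $|1-z\overline{w}|\asymp|1-z\overline{a}|$. Using the metric triangle inequality one checks that on $R_{1}$ one has $|1-z\overline{a}|\asymp|1-\langle w,a\rangle|$, and symmetrically $|1-z\overline{w}|\asymp|1-\langle w,a\rangle|$ on $R_{2}$; moreover $R_{1}$ is, up to comparable constants, a boundary region for $w$ of scale $|1-\langle w,a\rangle|$, and likewise for $R_{2}$. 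Freezing the non--singular factor on $R_{1}$ and $R_{2}$ then turns each integral into a one--variable integral over a box, while on $R_{3}$ the two kernels merge into $|1-z\overline{w}|^{-(t+r)}$ restricted to $\{|1-z\overline{w}|\gtrsim|1-\langle w,a\rangle|\}$.

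For part (1), where $t-\delta<2$, $r-\delta<2$ and $t+r-\delta>2$, the first building block on $R_{1}$ gives $|1-\langle w,a\rangle|^{-r}\cdot|1-\langle w,a\rangle|^{\delta+2-t}\log^{k}\frac{e}{|1-\langle w,a\rangle|}$, and $R_{2}$ gives the symmetric expression with $t$ and $r$ interchanged; both equal the claimed order $|1-\langle w,a\rangle|^{\delta+2-t-r}\log^{k}\frac{e}{|1-\langle w,a\rangle|}$. On $R_{3}$ the supercritical tail estimate (second block with $s=t+r$, $\rho=|1-\langle w,a\rangle|$) yields the same order, and summing proves the upper bound; the lower bound follows by discarding all but $R_{1}$. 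For part (2), which is the $n=1$ specialisation of the ball statement (so the exponent reads $t-\delta-2$), the same scheme applies but now the $w$--singularity is supercritical: on $R_{1}$ one uses the full--disk/box version with $s=t>\delta+2$ to produce $(1-|w|^{2})^{\delta+2-t}$, giving the dominant term $(1-|w|^{2})^{\delta+2-t}|1-\langle w,a\rangle|^{-r}\log^{k}\frac{e}{1-|w|^{2}}$, while $R_{2}$ (subcritical in $r$) and $R_{3}$ contribute terms dominated by it in the boundary regime $1-|w|^{2}\lesssim|1-\langle w,a\rangle|$, so the lower bound again comes from $R_{1}$.

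I expect the main obstacle to be twofold. First, establishing the one--variable box and tail estimates with the logarithmic weight \emph{uniformly} in $w$ and in the scale $\rho$ requires care: one must show that $\log^{k}\frac{e}{1-|z|^{2}}$ may be replaced by $\log^{k}\frac{e}{\rho}$ across each region without distorting the order, and this is where the bulk of the routine but delicate computation lies. Second, one must handle the degenerate regimes separately --- when $w$ and $a$ both stay in a fixed compact subset of $\dd$, or when $|1-\langle w,a\rangle|\asymp 1-|w|^{2}$, all quantities are comparable to absolute constants and the asserted equivalences must be checked to reduce to $\asymp 1$ --- and, in part (2), verify that the contributions of $R_{2}$ and $R_{3}$ are genuinely dominated by that of $R_{1}$. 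Once these points are settled, assembling the three regions yields both the upper and lower bounds and completes the proof.
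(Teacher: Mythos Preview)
The paper does not supply its own proof of this lemma: immediately before the statement it says ``We only list the required ones. See \cite{z} for the detailed proofs and other cases.'' So there is no argument in the paper to compare against; the result is quoted from Zhang--Li--Shang--Guo.

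Your proposal is a correct and essentially standard route to such two--point Forelli--Rudin estimates, and is very much in the spirit of how the cited reference proceeds. The decomposition into $R_{1},R_{2},R_{3}$ according to which kernel dominates, freezing the non--singular factor at the scale $|1-\langle w,a\rangle|$, and reducing to one--variable box/tail estimates with the logarithmic weight, is exactly the right mechanism. Two small remarks. First, $d(z,w)=|1-z\overline{w}|^{1/2}$ is not a metric on $\dd$ since $d(z,z)\neq 0$ for $|z|<1$; what you actually need---and what does hold on $\overline{\dd}$---is the triangle--type inequality $|1-w\overline{a}|^{1/2}\leq|1-z\overline{w}|^{1/2}+|1-z\overline{a}|^{1/2}$, which upon squaring gives precisely the comparison you use. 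Second, you are right that the exponent $t-\delta-n-1$ in part (2) is a remnant of the unit--ball formulation and should be read as $t-\delta-2$ in the disk; with that reading your analysis of the supercritical region $R_{1}$ producing the dominant term $(1-|w|^{2})^{\delta+2-t}|1-\langle w,a\rangle|^{-r}\log^{k}\frac{e}{1-|w|^{2}}$ is correct, and the subordination of $R_{2}$ and $R_{3}$ follows since $1-|w|^{2}\lesssim|1-\langle w,a\rangle|$.
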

We  also need the following estimates. (See e.g. Theorem 1.12 in \cite{zhu})
\begin{lemma}\label{lm2.5}
 Let $\alpha$ be any real number and $z\in \dd$.  Then
$$
\int^{2\pi}_0\frac{d\theta}{|1-ze^{-i\theta}|^{\alpha}}\asymp
\begin{cases}1 & \enspace \text{if} \ \ \alpha<1,\\
                     \log\frac{2}{1-|z|^2} & \enspace  \text{if} \ \  \alpha=1,\\
                     \frac{1}{(1-|z|^2)^{\alpha-1}} & \enspace \text{if}\ \  \alpha>1,
                   \end{cases}
$$
\end{lemma}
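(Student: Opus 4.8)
The plan is to reduce the problem to a one-dimensional real integral and analyze it by splitting the range of integration according to the scale set by $1-|z|$. First I would exploit rotation invariance: writing $z=re^{i\varphi}$ with $r=|z|$, the substitution $\theta\mapsto\theta+\varphi$ shows that the integral depends only on $r$, so it suffices to treat $z=r\in[0,1)$. For such $z$ one has the exact identity
$$|1-re^{-i\theta}|^{2}=(1-r)^{2}+4r\sin^{2}\tfrac{\theta}{2},$$
and since the integrand is even and $2\pi$-periodic we may work on $[0,\pi]$. When $r$ stays bounded away from $1$ (say $r\le\tfrac12$) the integrand is comparable to a positive constant, and each of the three right-hand sides is likewise bounded above and below by positive constants, so the equivalence is trivial; the only substantive regime is $r\to1$.

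In that regime I would use the elementary two-sided bound $\tfrac{\theta}{\pi}\le\sin\tfrac{\theta}{2}\le\tfrac{\theta}{2}$ valid for $\theta\in[0,\pi]$, together with $r\asymp1$, to obtain
$$|1-re^{-i\theta}|^{2}\asymp (1-r)^{2}+\theta^{2},\qquad \theta\in[0,\pi].$$
Hence, writing $d:=1-r$,
$$\int^{2\pi}_{0}\frac{d\theta}{|1-re^{-i\theta}|^{\alpha}}\asymp\int_{0}^{\pi}\frac{d\theta}{(d^{2}+\theta^{2})^{\alpha/2}}.$$
I would then split the last integral at $\theta=d$. On $[0,d]$ the denominator is comparable to $d^{\alpha}$, so this piece contributes $\asymp d^{1-\alpha}$. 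On $[d,\pi]$ the denominator is comparable to $\theta^{\alpha}$, so this piece contributes $\asymp\int_{d}^{\pi}\theta^{-\alpha}\,d\theta$.

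The three cases now fall out from evaluating $\int_{d}^{\pi}\theta^{-\alpha}\,d\theta$. For $\alpha>1$ the lower endpoint dominates and this integral is $\asymp d^{1-\alpha}$, matching the $[0,d]$ piece, so the total is $\asymp d^{1-\alpha}$. For $\alpha=1$ one gets $\log(\pi/d)\asymp\log\frac{1}{d}$, which dominates the $O(1)$ contribution from $[0,d]$. For $\alpha<1$ (including $\alpha\le 0$) the integral converges as $d\to0$ to a positive constant, while the $[0,d]$ piece $d^{1-\alpha}$ tends to $0$, so the total is $\asymp1$. Finally, since $1-r\asymp 1-r^{2}=1-|z|^{2}$ and $\log\frac{1}{1-r}\asymp\log\frac{2}{1-r^{2}}$ as $r\to1$, rewriting $d$ in terms of $1-|z|^{2}$ yields exactly the three stated estimates.

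I do not expect a genuine obstacle here: the result is classical and the argument is essentially bookkeeping. The only points requiring care are maintaining uniform implied constants throughout the splitting, so that both the $\lesssim$ and the $\gtrsim$ directions hold simultaneously, and correctly identifying which of the two pieces dominates in each of the three ranges of $\alpha$, the critical exponent $\alpha=1$ being the boundary at which the tail integral switches from convergent to logarithmically divergent.
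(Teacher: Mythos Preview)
Your argument is correct and is essentially the standard textbook proof of this classical estimate. Note, however, that the paper does not supply its own proof of this lemma: it simply records the statement and cites Theorem~1.12 of Zhu's monograph. So there is no ``paper's approach'' to compare against; what you have written is precisely the kind of direct computation that underlies the cited reference, and nothing further is needed.
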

The following result is known to experts. We give a  detailed proof by using  the  integral estimates with  double variable points. These integral estimates are practical and have its own interests. The reader is referred to  \cite{z,lzx,xt}  for various integral estimates.
\begin{lemma}\label{lm2.6}
Let $0<\lambda<1$, then for any  $c\leq \frac{1-\lambda}{2}$,  we have
$$f(z)=\frac{1}{(1-z)^{c}}\in L^{2,\lambda}.$$
\end{lemma}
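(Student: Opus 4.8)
The plan is to work with the equivalent norm for $L^{2,\lambda}$ recorded in the introduction (from \cite{Liu1}): a function $g$ lies in $L^{2,\lambda}$ precisely when
$$\sup_{w\in\dd}(1-|w|^{2})^{1-\lambda}\int_{\dd}|g'(z)|^{2}(1-|\sigma_{w}(z)|^{2})\,dA(z)<\infty .$$
First I would dispose of the range $c\le 0$: there $f(z)=(1-z)^{-c}=(1-z)^{|c|}$ satisfies $|f(z)|\le 2^{|c|}$, so $f\in H^{\infty}\subseteq BMOA=L^{2,1}\subseteq L^{2,\lambda}$. Hence it remains to treat $0<c\le\frac{1-\lambda}{2}$; note that then $0<c<\frac12$, so in particular $f\in H^{2}$.

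Next I would substitute $f'(z)=c(1-z)^{-c-1}$, so that $|f'(z)|^{2}=c^{2}|1-z|^{-2c-2}$, together with the identity $1-|\sigma_{w}(z)|^{2}=\frac{(1-|w|^{2})(1-|z|^{2})}{|1-z\overline{w}|^{2}}$. The quantity to be bounded then becomes
$$c^{2}(1-|w|^{2})^{2-\lambda}\int_{\dd}\frac{1-|z|^{2}}{|1-z|^{2c+2}\,|1-z\overline{w}|^{2}}\,dA(z).$$
To the integral I would apply Lemma~\ref{lm2.4}(1) with $\delta=1$, $t=2$, $r=2c+2$, $k=0$ and with the boundary point $a=1$ (the estimate, or rather its proof, still applies when $a\in\partial\dd$; alternatively one applies it with $a=\rho\in(0,1)$ and lets $\rho\to1^{-}$). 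The hypotheses are met since $t+r-\delta=2c+3>2$, $t-\delta=1<2$ and $r-\delta=2c+1<2$ (using $c<\frac12$), and they yield
$$\int_{\dd}\frac{1-|z|^{2}}{|1-z|^{2c+2}\,|1-z\overline{w}|^{2}}\,dA(z)\asymp\frac{1}{|1-w|^{2c+1}} .$$

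Finally I would combine the two displays: since $1-|w|^{2}\le 2|1-w|$ and $2-\lambda>0$,
$$(1-|w|^{2})^{2-\lambda}\,\frac{1}{|1-w|^{2c+1}}\lesssim|1-w|^{(2-\lambda)-(2c+1)}=|1-w|^{\,1-\lambda-2c}\le 2^{\,1-\lambda-2c},$$
where the last step uses $1-\lambda-2c\ge 1-\lambda-2\cdot\frac{1-\lambda}{2}=0$. Thus the supremum over $w\in\dd$ is finite and $f\in L^{2,\lambda}$. The only step demanding genuine care is the use of Lemma~\ref{lm2.4} at the boundary singularity $a=1$; everything else is bookkeeping of exponents, and the borderline case $c=\frac{1-\lambda}{2}$ is exactly the place where the final exponent $1-\lambda-2c$ degenerates to $0$.
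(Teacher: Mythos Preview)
Your argument is correct. Both you and the paper begin from the same equivalent norm and arrive at the same integral
\[
(1-|w|^{2})^{2-\lambda}\int_{\dd}\frac{1-|z|^{2}}{|1-z|^{2c+2}\,|1-z\overline{w}|^{2}}\,dA(z),
\]
but you then diverge. The paper restricts to $c=\frac{1-\lambda}{2}$, passes to polar coordinates, invokes an angular two--point estimate from \cite{xt} (Proposition~3.1--(7)) for $\int_0^{2\pi}\frac{d\theta}{|1-re^{i\theta}|^{3-\lambda}|1-r\overline{w}e^{i\theta}|^{2}}$, bounds the resulting pieces by $\frac{(1-r)^{\lambda-1}}{(1-r|w|)^{2}}$, and finishes with a one--variable radial integral estimate. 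You instead apply Lemma~\ref{lm2.4}(1) directly to the area integral and read off $|1-w|^{-(2c+1)}$, which is shorter and handles all $0<c\le\frac{1-\lambda}{2}$ in one stroke. The price you pay is that Lemma~\ref{lm2.4} is stated for $a\in\dd$, so the step $a=1$ needs the limiting argument you flag; since only the upper bound is required, Fatou's lemma applied to $a=\rho\uparrow 1$ makes that step rigorous. The paper's route avoids this boundary subtlety at the cost of an extra external ingredient and more computation.
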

\begin{proof}
It is suffices to prove the case of  $c=\frac{1-\lambda}{2}$. For $0<r<1$ and $w\in \dd$, by Proposition 3.1-(7) in \cite{xt}  we have
$$\int_{0}^{2\pi}\frac{d\theta}{|1-re^{i\theta}|^{3-\lambda}|1-r\overline{w}e^{i\theta}|^{2}}\asymp \frac{1}{(1-r)^{2-\lambda}|1-r^{2}\overline{w}|^{2}}+\frac{1}{(1-r^{2}|w|)|1-r^{2}\overline{w}|^{3-\lambda}}.$$
It is easy to check that
$$\frac{1}{|1-r^{2}\overline{w}|^{2}}\lesssim\frac{1}{(1-r|w|)^{2}}\ \ \mbox{and}\ \frac{1-r}{(1-r^{2}|w|)|1-r^{2}\overline{w}|^{3-\lambda}}\lesssim \frac{(1-r)^{\lambda-1}}{(1-r|w|)^{2}}.$$
Using  the  polar coordinate formula  and above  inequalities we get
 \[ \begin{split}
  ||f||_{L^{2.\lambda}}&\asymp \sup_{w\in \dd}\left((1-|w|^{2})^{1-\lambda}\int_{\dd}|f'(z)|^{2}(1-|\sigma_{w}(z)|^{2})dA(z)\right)^{\frac{1}{2}}\\
& \lesssim  \sup_{w\in \dd}(1-|w|^{2})^{\frac{2-\lambda}{2}}\left(
\int_{0}^{1}(1-r)\int_{0}^{2\pi}\frac{d\theta}{|1-re^{i\theta}|^{3-\lambda}|1-r\overline{w}e^{i\theta}|^{2}}dr\right)^{\frac{1}{2}}\\
& \asymp\sup_{w\in \dd}(1-|w|^{2})^{\frac{2-\lambda}{2}}\left(
\int_{0}^{1}\frac{(1-r)^{\lambda-1}}{|1-r^{2}\overline{w}|^{2}}dr+\int_{0}^{1}\frac{1-r}{(1-r^{2}|w|)|1-r^{2}\overline{w}|^{3-\lambda}}dr\right)^{\frac{1}{2}}\\
& \lesssim \sup_{w\in \dd}(1-|w|^{2})^{\frac{2-\lambda}{2}}\left(
\int_{0}^{1}\frac{(1-r)^{\lambda-1}}{(1-r|w|)^{2}}dr\right)^{\frac{1}{2}}\\
&\lesssim 1.
  \end{split} \]
 The last step above we have used the integral estimate
 $$\int_{0}^{1}\frac{(1-r)^{\lambda-1}}{(1-r|w|)^{2}}dr \asymp \frac{1}{(1-|w|)^{2-\lambda}},$$
which can be found in the literature \cite{z1}.
\end{proof}

\section{Proofs of the main results}
First, we give some characterizations of positive Borel measures $\mu$
on $[0, 1)$ as logarithmic type Carleson measures, this will be used in our proofs.
\begin{proposition}\label{pro3.1}
Suppose  $\beta>0$, $\gamma\geq 0$, $0\leq  q<s<\infty$ and  $\mu$ is a finite positive  Borel measure on $[0,1)$. Then the following conditions are equivalent:
\begin{enumerate}
  \item [(1)] $\mu$ is a $\gamma$-logarithmic $s$-Carleson measure;
   \item [(2)]  $$
S_{1}:=\sup_{w\in\dd}\int_{0}^{1}\frac{(1-|w|)^{\beta}\log^{\gamma}\frac{e}{1-|w|}}{(1-t)^{q}(1-|w|t)^{s+\beta-q}}d\mu(t)<\infty;
$$
  \item [(3)]$$
S_{2}:=\sup_{w\in\dd}\int_{0}^{1}\frac{(1-|w|)^{\beta}\log^{\gamma}\frac{e}{1-|w|}}{(1-t)^{q}|1-wt|^{s+\beta-q}}d\mu(t)<\infty.
$$
\item [(4)]$$
S_{3}:=\sup_{w\in\dd}\int_{0}^{1}\frac{(1-|w|)^{\beta}\log^{\gamma}\frac{e}{1-t}}{(1-t)^{q}(1-|w|t)^{s+\beta-q}}d\mu(t)<\infty.
$$
\end{enumerate}
\end{proposition}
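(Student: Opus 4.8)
The plan is to prove the cycle $(1)\Rightarrow(2)\Rightarrow(3)\Rightarrow(1)$ together with the separate equivalence $(1)\Leftrightarrow(4)$; this concentrates essentially all the work in the single implication $(1)\Rightarrow(2)$ (and its twin $(1)\Rightarrow(4)$), the rest being elementary. Throughout I would use the elementary facts that for $w\in\dd$, $t\in[0,1)$ one has $1-|w|t\le|1-wt|\le 2$ and $1-|w|t=(1-|w|)+|w|(1-t)\asymp(1-|w|)+(1-t)\asymp\max\{1-|w|,\,1-t\}$, that $\log\frac{e}{1-t}\ge 1$, that $s+\beta-q>0$ (since $q<s$ and $\beta>0$), and that the integrands defining $S_1$ and $S_3$ depend on $w$ only through $|w|$, so the suprema there may be taken over $w=r\in[0,1)$. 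Since $|1-wt|\ge 1-|w|t$, the integrand in $(3)$ is pointwise dominated by that in $(2)$, giving $S_2\le S_1$ and hence $(2)\Rightarrow(3)$; conversely, evaluating $S_1$ at real $w$ (where $1-wt=|1-wt|$) shows $S_1\le S_2$, so in fact $S_1=S_2$.

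For the easy directions $(3)\Rightarrow(1)$ and $(4)\Rightarrow(1)$ I would test at a real point $w=r\in[0,1)$ and keep only the mass on $[r,1)$, where $1-t\le 1-r$, $1-rt\le 2(1-r)$ and $\log\frac{e}{1-t}\ge\log\frac{e}{1-r}$. This yields
\[
S_2\ \gtrsim\ \frac{(1-r)^{\beta}\log^{\gamma}\frac{e}{1-r}}{(1-r)^{q}\,(1-r)^{s+\beta-q}}\,\mu([r,1))\ =\ \frac{\log^{\gamma}\frac{e}{1-r}}{(1-r)^{s}}\,\mu([r,1)),
\]
and the same computation, using $\log^{\gamma}\frac{e}{1-t}\ge\log^{\gamma}\frac{e}{1-r}$ inside the integral, gives the identical lower bound for $S_3$; either way $\log^{\gamma}\frac{e}{1-r}\,\mu([r,1))\lesssim(1-r)^{s}$, which is exactly the assertion that $\mu$ is a $\gamma$-logarithmic $s$-Carleson measure.

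The core is $(1)\Rightarrow(2)$. Here I would split $[0,1)=\bigcup_{n\ge 0}I_n$ with $I_n=[1-2^{-n},1-2^{-n-1})$, so that on $I_n$ one has $1-t\asymp 2^{-n}$ and $\log\frac{e}{1-t}\asymp n+1$, and use the hypothesis in the form $\mu(I_n)\le\mu([1-2^{-n},1))\lesssim 2^{-ns}(n+1)^{-\gamma}$. Fixing $w$ and choosing $m\ge 0$ with $1-|w|\asymp 2^{-m}$, one has $1-|w|t\asymp 2^{-\min\{m,n\}}$ on $I_n$, so the $n$-th block of the integral in $S_1$ satisfies
\[
\int_{I_n}\frac{d\mu(t)}{(1-t)^{q}(1-|w|t)^{s+\beta-q}}\ \lesssim\ \frac{2^{-n(s-q)}\,2^{\min\{m,n\}(s+\beta-q)}}{(n+1)^{\gamma}}.
\]
Summing over $n\le m$ (where the summand equals $2^{n\beta}(n+1)^{-\gamma}$) and over $n>m$ (where it equals $2^{m(s+\beta-q)}2^{-n(s-q)}(n+1)^{-\gamma}$) produces two geometric-type series, convergent precisely because $\beta>0$ and $s-q>0$, each dominated by its extreme term $\asymp 2^{m\beta}(m+1)^{-\gamma}\asymp(1-|w|)^{-\beta}\log^{-\gamma}\frac{e}{1-|w|}$. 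Multiplying by $(1-|w|)^{\beta}\log^{\gamma}\frac{e}{1-|w|}$ and taking the supremum gives $S_1<\infty$. For $(1)\Rightarrow(4)$ the same decomposition applies, but the extra factor $\log^{\gamma}\frac{e}{1-t}\asymp(n+1)^{\gamma}$ in the integrand now cancels the $(n+1)^{-\gamma}$ coming from $\mu(I_n)$; the resulting sum is of order $2^{m\beta}\asymp(1-|w|)^{-\beta}$, and multiplying by $(1-|w|)^{\beta}$ gives $S_3<\infty$ — note that no logarithmic factor in $|w|$ is needed, in accordance with the statement. (Alternatively, one may integrate by parts against the distribution function $t\mapsto\mu([t,1))$ and invoke one-variable versions of the integral estimates in \cite{z,z1}.)

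The only genuine difficulty is the bookkeeping in this dyadic summation: one must carry the three exponents $q$, $s+\beta-q$ and $s$ together with the logarithmic powers through the $\min\{m,n\}$ dichotomy and verify that the relevant geometric series converge — and it is exactly here that the hypotheses $\beta>0$ and $q<s$ (and nothing more) enter. Everything else is routine.
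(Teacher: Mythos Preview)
Your argument is correct and somewhat more elementary than the paper's. The overall logical structure is the same --- the easy directions $(2)\Rightarrow(3)$, $(3)\Rightarrow(1)$, $(4)\Rightarrow(1)$ are handled identically --- but the two substantive implications are treated differently. For $(1)\Rightarrow(2)$ the paper uses a $w$-adapted dyadic decomposition $Q_n(w)=\{t:1-2^n(1-|w|)\le t<1\}$ and, on each shell, rewrites $\int(1-t)^{-q}\,d\mu$ via the distribution function $x\mapsto\mu(\{t:1-t<1/x\})$ before invoking the Carleson hypothesis; you instead take the fixed partition $I_n=[1-2^{-n},1-2^{-n-1})$ (independent of $w$) and bound each block directly using $\mu(I_n)\lesssim 2^{-ns}(n+1)^{-\gamma}$, which bypasses the distribution-function step entirely. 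For $(1)\Rightarrow(4)$ the paper does not compute directly but reduces to $(2)$ by comparing $\log^{\gamma}\frac{e}{1-t}$ with $\log^{\gamma}\frac{e}{1-|w|}$ through a normal-function inequality (Lemma~2.2 of \cite{zl}); your direct computation, in which the factor $(n+1)^{\gamma}$ from $\log^{\gamma}\frac{e}{1-t}$ simply cancels the $(n+1)^{-\gamma}$ coming from the Carleson bound, is shorter and more transparent. Your observation that in fact $S_1=S_2$ (since $S_1$ depends only on $|w|$) is also cleaner than the paper's one-sided remark that $(2)\Rightarrow(3)$ is obvious. The paper's route may generalise more readily to other weights, but for the statement as given your approach is both valid and simpler.
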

\begin{proof}
The proof of $(2)\Rightarrow(1)$ is straightforward. In fact,
 \[ \begin{split}
 S_{1}&:=\sup_{w\in\dd}\int_{0}^{1}\frac{(1-|w|)^{\beta}\log^{\gamma}\frac{e}{1-|w|}}{(1-t)^{q}(1-|w|t)^{s+\beta-q}}d\mu(t)\\
 & \geq \int_{|w|}^{1}\frac{(1-|w|)^{\beta}\log^{\gamma}\frac{e}{1-|w|}}{(1-t)^{q}(1-|w|t)^{s+\beta-q}}d\mu(t)\\
 & \gtrsim \frac{\mu(|w|,1)\log^{\gamma}\frac{e}{1-|w|}}{(1-|w|)^{s}}.
  \end{split} \]
  This  finish the proof of $(2)\Rightarrow(1)$. Similarly, we may obtain $(3)\Rightarrow(1)$ and  $(4)\Rightarrow(1)$. Since $(2)\Rightarrow(3)$ is obvious, to complete the proof we have to prove that $(1)\Rightarrow(2)$ and $(1)\Rightarrow(4)$.

$(1)\Rightarrow(2)$. The proof of this implication  follows closely the arguments of the proof of Proposition 2.1 in  \cite{bao}. We include a detailed proof for completeness.

It is suffices  to consider  the case $w\in \dd$ with $\frac{1}{2}\leq |w|<1$ and $q>0$.
For every positive integer $n\geq1$, let
$$
	Q_{0}(w)=\varnothing , Q_n(w)=\{t\in[0,1): 1-2^n(1-|w|)\leq t<1\} .
$$
Let $n_w$ be the minimal integer such that $1-2^{n_w}(1-|w|)\leq 0$. Then $Q_n(w)=[0, 1)$ when $n\geq n_w$.
If   $ t\in Q_1(w)$, then
$$1-|w| \leq  1-|w|t.$$
Also, for $2\leq n\leq  n_w$ and $t\in Q_n(w)\backslash Q_{n-1}(w)$, we have
$$
(2^{n-1}-1)(1-|w|)=|w|-(1-2^{n-1}(1-|w|))\leq |w|-t\leq 1-|w|t.
$$
Notice that   $\beta>0$, $\gamma\geq 0$,  $0<q<s<\infty$ and  $\mu$ is a $\gamma$-logarithmic $s$-Carleson measure, these together with above inequalities we have
{\small {\small
 \[ \begin{split}
& \ \ \ \ \int_{0}^{1}\frac{\log^{\gamma}\frac{e}{1-|w|}(1-|w|)^{\beta}}{(1-t)^q(1-|w|t)^{s+\beta-q}}d\mu(t)\\
& =\sum^{n_w}_{n=1}\int_{Q_n(w)\backslash Q_{n-1}(w)}\frac{\log^{\gamma}\frac{e}{1-|w|}(1-|w|)^\beta}{(1-t)^q(1-|w|t)^{s+\beta-q}}d\mu(t)\\
&\lesssim \sum^{n_w}_{n=1}\frac{\log^{\gamma}\frac{e}{1-|w|}(1-|w|)^{q-s}}{2^{n(s+\beta-q)}}\int_{Q_n(w)\backslash Q_{n-1}(w)} \frac{1}{(1-t)^{q}}d\mu(t)\\
&\lesssim   \sum^{n_w}_{n=1}\frac{\log^{\gamma}\frac{e}{1-|w|}(1-|w|)^{q-s}}{2^{n(s+\beta-q)}}\int_0^\infty x^{q-1}\mu\big(\big\{t\in[1-2^n(1-|w|),1): 1-\frac{1}{x}<t\big\}\big)dx\\
&\asymp \sum^{n_w}_{n=1}\frac{\log^{\gamma}\frac{e}{1-|w|}(1-|w|)^{q-s}}{2^{n(s+\beta-q)}}\int_0^{\frac{1}{2^n(1-|w|)}}
x^{q-1}\mu\big([1-2^n(1-|w|),1))dx\\
&\ \ \ \ 	+  \sum^{n_w}_{n=1}\frac{\log^{\gamma}\frac{e}{1-|w|}(1-|w|)^{q-s}}{2^{n(s+\beta-q)}}
\int_{\frac{1}{2^n(1-|w|)}}^\infty x^{q-1}\mu\big(\big[1-\frac{1}{x},1\big)\big)dx\\
&\lesssim \sum^{n_w}_{n=1}\frac{\log^{\gamma}\frac{e}{1-|w|}(1-|w|)^{q-s}}{2^{n(s+\beta-q)}}\left(
\frac{2^{ns}(1-|w|)^{s}}{\log^{\gamma}\frac{e}{2^{n}(1-|w|)}}\int_0^{\frac{1}{2^n(1-|w|)}}x^{q-1}dx+\int_{\frac{1}{2^n(1-|w|)}}^\infty \frac{\log^{-\gamma}ex}{x^{s+1-q}}dx\right)\\
&\lesssim  \sum^{n_w}_{n=1} \frac{1}{2^{\beta n}}\frac{\log^{\gamma}\frac{e}{1-|w|}}{\log^{\gamma}\frac{e}{2^{n}(1-|w|)}}\lesssim  \sum^{n_w}_{n=1} \frac{1}{2^{\beta n}}\left(1+\frac{n^{\gamma}\log 2}{\log^{\gamma}\frac{2}{2^{n}(1-|w|)}}\right)
\lesssim \sum^{n_w}_{n=1} \frac{n^{\gamma}}{2^{\beta n}}\lesssim 1.
	 \end{split} \]}}
This implies that
$$
S_{1}:=\sup_{w\in \dd}\int_{0}^{1}\frac{(1-|w|)^\beta}{(1-t)^q(1-|w|t)^{s+\beta-q}}d\mu(t)<\infty.
$$

$(1)\Rightarrow(4)$.   We only need consider the case  of  $\gamma>0$. For $0<\delta<s-q$,  let
$$f(t)=(1-t)^{\delta}\log^{\gamma}\frac{e}{1-t}, \ \  0\leq t<1.$$
It is known  that $f$ is a normal function on $[0,1)$. Furthermore, we may choosing  $b=\delta$ and $0<a=\varepsilon<\delta$ such that
$$\frac{f(t)}{(1-t)^{b}}\mbox{is increasing},\  \frac{f(t)}{(1-t)^{a}}\mbox{is decreasing}, \ \mbox{as}\ t\rightarrow 1^{-}. $$
Hence, it follows form  Lemma 2.2 in \cite{zl} that
$$\frac{f(t)}{f(r)}\lesssim  \left(\frac{1-t}{1-r}\right)^{\varepsilon}+\left(\frac{1-t}{1-r}\right)^{\delta}\eqno{(3.1)}$$
for all $0<t,r<1$. Bearing in mind that $(1)\Leftrightarrow(2)$ we have proved already.
 By (3.1) we have
 \[ \begin{split}
&\ \ \ \ \int_{0}^{1}\frac{(1-|w|)^{\beta}\log^{\gamma}\frac{e}{1-t}}{(1-t)^{q}(1-|w|t)^{s+\beta-q}}d\mu(t)\\
&= \int_{0}^{1}\frac{(1-|w|)^{\beta+\delta}\log^{\gamma}\frac{e}{1-|w|}}{(1-t)^{q+\delta}(1-|w|t)^{s+\beta-q}}\cdot \frac{f(t)}{f(|w|)}d\mu(t)\\
&\lesssim \int_{0}^{1}\frac{(1-|w|)^{\beta+\delta}\log^{\gamma}\frac{e}{1-|w|}}{(1-t)^{q+\delta}(1-|w|t)^{s+\beta-q}} \left\{\left(\frac{1-t}{1-|w|}\right)^{\varepsilon}+\left(\frac{1-t}{1-|w|}\right)^{\delta}\right\}d\mu(t)\\
&\lesssim \int_{0}^{1}\frac{(1-|w|)^{\beta}\log^{\gamma}\frac{e}{1-|w|}}{(1-t)^{q}(1-|w|t)^{s+\beta-q}} d\mu(t)+ \int_{0}^{1}\frac{(1-|w|)^{\beta+\delta-\varepsilon}\log^{\gamma}\frac{e}{1-|w|}}{(1-t)^{q+\delta-\varepsilon}(1-|w|t)^{s+\beta-q}} d\mu(t)\\
&\lesssim 1.
  \end{split} \]
  This gives $(4)$.
\end{proof}
\begin{remark}
For $\gamma\in \mathbb{R}$ and $0<s<\infty$, we may prove the following result in a same way.
$$\sup_{t\in [0,1)}\frac{\log^{\gamma}\frac{e}{1-t}\mu([t,1))}{(1-t)^{s}}<\infty\Leftrightarrow(3.1)\Leftrightarrow (3.2)\Leftrightarrow (3.3).$$
\end{remark}
We  now present the proofs of Theorems 1.1--Theorem 1.5.

\begin{proof of Theorem 1.1}
 (1).
 If $\Cu(H^{p})\subseteq X$,  take
 $$f_{a}(z)=\frac{(1-a)}{(1-az)^{1+\frac{1}{p}}},\ \ 0<a<1.$$
 Then $f_{a}\in H^{p}$ for all $0<p\leq \infty$ and $\sup_{0<a<1}||f_{a}||_{p}\lesssim 1$. This implies that
  $$\Cu(f_{a})\in X\subseteq \mathcal {B}^{\frac{3-\lambda}{2}}.$$
  It is easy to see that
  $$\Cu(f_{a})'(z)=\int_{0}^{1}\frac{tf'_{a}(tz)}{(1-tz)}d\mu(t)+\int_{0}^{1}\frac{tf_{a}(tz)}{(1-tz)^{2}}d\mu(t).$$
  Since $\Cu(f_{a})\in X \subseteq \mathcal {B}^{\frac{3-\lambda}{2}}$,  it follows from Lemma \ref{lm2.1}  that
 $$|\Cu(f_{a})'(a)|\lesssim \frac{1}{(1-a)^{\frac{3-\lambda}{2}}},\ \  a\in(0,1).$$
 Then it follows that, for $\frac{1}{2}<a<1$,
 \[ \begin{split}
 \frac{1}{(1-a)^{\frac{3-\lambda}{2}}}&\gtrsim \left|\int_{0}^{1}\frac{(1+\frac{1}{p})ta(1-a)}{(1-ta)(1-ta^{2})^{2+\frac{1}{p}}}d\mu(t)+\int_{0}^{1}\frac{t(1-a)}{(1-ta)^{2}(1-ta^{2})^{1+\frac{1}{p}}}d\mu(t)\right|\\
 &\gtrsim \int_{a}^{1}\frac{1}{(1-ta^{2})^{2+\frac{1}{p}}}d\mu(t)\\
& \gtrsim\frac{\mu([a,1))}{(1-a)^{2+\frac{1}{p}}}.
 \end{split} \]
 This gives that
 $$\mu([a,1))\lesssim (1-a)^{\frac{1+\lambda}{2}+\frac{1}{p}}\ \ \mbox{for all }\  \frac{1}{2}<a<1.$$
 This implies that $\mu$ is a  $\frac{1+\lambda}{2}+\frac{1}{p}$-Carleson measure.

 On the other hand,  suppose $\mu$ is a $\frac{1+\lambda}{2}+\frac{1}{p}$-Carleson measure. Let $L^{2,\lambda}\subseteq X\subseteq\mathcal {B}^{\frac{3-\lambda}{2}}$, to prove $\Cu(H^{p})\subseteq X$ it is sufficient to prove that $\Cu:H^{p}\rightarrow L^{2,\lambda}$ is bounded. Without loss of generality, we may assume $f\in H^{p}$ and $f(0)=0$. By (1.3), we know that
$$
	\Cu (f)'(z)=\int_{0}^{1}\frac{tf'(tz)}{(1-tz)}d\mu(t)+ \int_{0}^{1}\frac{tf(tz)}{(1-tz)^{2}}d\mu(t), \quad z\in\dd.
$$
Let
\[\delta_{p}=\left\{
  \begin{array}{cc}
 \displaystyle{\frac{1}{p}} & \ \ \ \ \ \ \ \ \displaystyle{0<p<\infty};\\
 \displaystyle{0}& \ \ \ \ \ \ \ \ \ \displaystyle{p=\infty.}
 \end{array}\right.\]
It is known that (see e.g.  page 36  in \cite{du})
$$|f(z)|\lesssim \frac{||f||_{p}}{(1-|z|)^{\delta_{p}}},$$
and hence  $$|f'(z)|\lesssim \frac{||f||_{p}}{(1-|z|)^{1+\delta_{p}}}.$$
It follows that
\[ \begin{split}
|\Cu (f)'(z)|&\leq \int_{0}^{1}\frac{|tf'(tz)|}{|1-tz|}d\mu(t)+ \int_{0}^{1}\frac{|tf(tz)|}{|1-tz|^{2}}d\mu(t)\\
&\leq ||f||_{p}\int_{0}^{1}\frac{d\mu(t)}{|1-tz|(1-t|z|)^{1+\delta_{p}}}+ ||f||_{p} \int_{0}^{1}\frac{d\mu(t)}{(1-t|z|)^{\delta_{p}}|1-tz|^{2}}
  \end{split} \]
$$\lesssim ||f||_{p} \int_{0}^{1}\frac{d\mu(t)}{|1-tz|(1-t|z|)^{1+\delta_{p}}}.\  \ \ \ \ \ \ \ \ \ \ \ \ \ \ \ \ \ \ \ \ \ \ \ \ \ \ \ \ \ \ \  \eqno{(3.2)}
$$
  Since $0<\lambda<1$, we can choose a positive real  number $1-\lambda<\sigma<1$ such that
  $$\frac{1}{(1-t|z|)^{2+2\delta_{p}}}\leq \frac{1}{(1-t)^{2+2\delta_{p}-\sigma}(1-|z|)^{\sigma}}.\eqno{(3.3)}$$
By (3.2)  and Minkowski's inequality, (3.3), Lemma \ref{lm2.4} and Proposition \ref{pro3.1}, we get
  \[ \begin{split}
 &||\Cu(f)||_{L^{2.\lambda}}\asymp \sup_{w\in \dd}\left((1-|w|^{2})^{1-\lambda}\int_{\dd}|\Cu(f)'(z)|^{2}(1-|\sigma_{w}(z)|^{2})dA(z)\right)^{\frac{1}{2}}\\
&  \lesssim  ||f||_{p}\sup_{w\in \dd}(1-|w|^{2})^{\frac{2-\lambda}{2}}\left(\int_{\dd}
  \left(\int_{0}^{1}\frac{d\mu(t)}{|1-tz|(1-t|z|)^{1+\delta_{p}}}\right)^{2}\frac{1-|z|^{2}}{|1-z\overline{w}|^{2}}dA(z)\right)^{\frac{1}{2}}\\
 & \leq  ||f||_{p}\sup_{w\in \dd}(1-|w|^{2})^{\frac{2-\lambda}{2}}\int_{0}^{1}\left(\int_{\dd}\frac{(1-|z|^{2})dA(z)}{(1-t|z|)^{2(1+\delta_{p})}|1-tz|^{2}|1-z\overline{w}|^{2}}\right)^{\frac{1}{2}}d\mu(t)\\
 & \lesssim  ||f||_{p}\sup_{w\in \dd}(1-|w|^{2})^{\frac{2-\lambda}{2}}\int_{0}^{1}\frac{1}{(1-t)^{1+\delta_{p}-\frac{\sigma}{2}}}
 \left(\int_{\dd}\frac{(1-|z|)^{1-\sigma}dA(z)}{|1-tz|^{2}|1-z\overline{w}|^{2}}\right)^{\frac{1}{2}}d\mu(t)\\
 & \asymp  ||f||_{p}\sup_{w\in \dd} \int_{0}^{1}\frac{(1-|w|^{2})^{\frac{2-\lambda}{2}}}{(1-t)^{1+\delta_{p}-\frac{\sigma}{2}}|1-tw|^{\frac{1+\sigma}{2}}}d\mu(t)\\
 & \lesssim ||f||_{p}.
    \end{split} \]
    Therefore, $\Cu:H^{p}\rightarrow L^{2,\lambda}$ is bounded.

  The proof is complete.
\end{proof of Theorem 1.1}

\begin{corollary}
Suppose $0<\lambda\leq 1$ and  $\mu$ is a  finite positive Borel measure on the interval $[0,1)$. Then  $\Cu: H^{\infty}\rightarrow L^{2,\lambda}$ is bounded if and only if $\mu$ is a $\frac{1+\lambda}{2}$-Carleson measure.
\end{corollary}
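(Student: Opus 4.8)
The plan is to read the corollary off from Theorem 1.1 by specializing to the endpoint exponent $p=\infty$ and to the smallest admissible target space $X=L^{2,\lambda}$. First I would note that the inclusion $L^{2,\lambda}\subseteq \mathcal{B}^{\frac{3-\lambda}{2}}$ recorded in the introduction (a consequence of the pointwise estimate (1.1)) shows that the choice $X=L^{2,\lambda}$ satisfies the hypothesis $L^{2,\lambda}\subseteq X\subseteq \mathcal{B}^{\frac{3-\lambda}{2}}$ of Theorem 1.1 for every $0<\lambda\le 1$. Theorem 1.1 with $p=\infty$ (so that $\tfrac1p=0$, i.e. $\delta_p=0$) then asserts precisely that $\Cu(H^{\infty})\subseteq L^{2,\lambda}$ if and only if $\mu$ is a $\tfrac{1+\lambda}{2}$-Carleson measure.

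It then remains to upgrade the inclusion $\Cu(H^{\infty})\subseteq L^{2,\lambda}$ to the norm boundedness of $\Cu\colon H^{\infty}\to L^{2,\lambda}$. This is exactly the closed graph reduction described at the end of Section 1: $H^{\infty}$ and $L^{2,\lambda}$ are Banach spaces continuously embedded in $H(\dd)$, and $\Cu$ is continuous on $H(\dd)$ (the coefficients, equivalently the values, of $\Cu(f)$ depend continuously on $f$ via the formula (1.3)); hence the graph of $\Cu\colon H^{\infty}\to L^{2,\lambda}$ is closed, and $\Cu$ is automatically bounded once it maps $H^{\infty}$ into $L^{2,\lambda}$. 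Combining this with the equivalence of the preceding paragraph completes the proof.

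I do not anticipate a genuine obstacle here; the only point worth double-checking is that the sufficiency half of Theorem 1.1 — written in the text under the standing assumption $0<\lambda<1$ — still goes through at the endpoint $\lambda=1$, where $L^{2,1}=BMOA$. Tracing the argument, the restriction $\lambda<1$ was used only to choose $\sigma$ with $1-\lambda<\sigma<1$; for $\lambda=1$ one simply takes any $0<\sigma<1$, and then the elementary bound (3.3), the integral estimate from Lemma \ref{lm2.4}, and the final application of Proposition \ref{pro3.1} (with $q=1-\tfrac{\sigma}{2}<\tfrac{1+\lambda}{2}=s$) all remain valid verbatim. Thus the corollary holds for all $0<\lambda\le 1$, in particular recovering the known characterization of the boundedness of $\Cu$ from $H^{\infty}$ into $BMOA$.
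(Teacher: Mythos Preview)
Your proposal is correct and follows exactly the paper's intended route: Corollary 3.3 is stated without proof immediately after Theorem 1.1, so specializing Theorem 1.1 to $p=\infty$ and $X=L^{2,\lambda}$, together with the closed graph reduction announced at the end of Section 1, is precisely what the authors have in mind. Your additional check that the sufficiency argument still works at the endpoint $\lambda=1$ (where the paper's choice ``$1-\lambda<\sigma<1$'' needs the trivial adjustment to ``$0<\sigma<1$'') is a nice point that the paper itself leaves implicit.
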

\begin{remark}
If $\frac{1}{2}<\alpha\leq1$, then Corollary 3.3  show that   $\mu$ is an $\alpha$-Carleson measure if and only if $\Cu(H^{\infty})\subseteq L^{2,2\alpha-1}$. When $0<\alpha\leq \frac{1}{2}$ and $\mu$ is an $\alpha$-Carleson measure, by  Proposition \ref{pro3.1} we have
  \[ \begin{split}
  \sup_{z\in\dd}(1-|z|^{2})^{2-\alpha}|\Cu(f)'(z)|&\lesssim ||f||_{H^\infty}\sup_{z\in\dd}\int_{0}^{1}\frac{(1-|z|^{2})^{2-\alpha}d\mu(t)}{(1-t|z|)|1-tz|}\\
  &\lesssim ||f||_{H^\infty}\sup_{z\in\dd}\int_{0}^{1}\frac{(1-|z|^{2})^{1-\alpha}d\mu(t)}{|1-tz|}\\
  &\lesssim ||f||_{H^\infty}.
     \end{split} \]
This yields  that $\Cu(H^{\infty})\subseteq \mathcal {B}^{2-\alpha}$.$\square$

\end{remark}
For $ 2<p\leq \infty$, it follows from Theorem 9 in \cite{Wuz1} that  the Cesr\`{a}o operator $\mathcal {C}$ is bounded from $H^{p}$ to $L^{2,1-\frac{2}{p}}$. As a consequence of Theorem 1.1, we have the following result.
\begin{corollary}
Suppose   $2<p\leq \infty$ and  $\mu$ is a  finite positive Borel measure on the interval $[0,1)$. Then $\Cu: H^{p}\rightarrow L^{2,1-\frac{2}{p}}$ is bounded if and only if $\mu$ is a  $1$-Carleosn measure.
\end{corollary}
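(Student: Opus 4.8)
The plan is to read this off directly from Theorem 1.1 by specializing its parameters. I would set $\lambda=1-\frac{2}{p}$; since $2<p\le\infty$ this gives $0<\lambda\le1$, so Theorem 1.1 applies with this value of $\lambda$. The choice $X=L^{2,\lambda}$ is admissible in that theorem, because the requirement $L^{2,\lambda}\subseteq X\subseteq\mathcal{B}^{\frac{3-\lambda}{2}}$ then collapses to $L^{2,\lambda}\subseteq L^{2,\lambda}\subseteq\mathcal{B}^{\frac{3-\lambda}{2}}$, whose only nontrivial part is the inclusion $L^{2,\lambda}\subseteq\mathcal{B}^{\frac{3-\lambda}{2}}$ already recorded in the introduction (via the growth estimate $(1.1)$).

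With these choices, Theorem 1.1 states that $\Cu(H^{p})\subseteq L^{2,1-\frac{2}{p}}$ if and only if $\mu$ is a $\bigl(\frac{1+\lambda}{2}+\frac{1}{p}\bigr)$-Carleson measure. Substituting $\lambda=1-\frac{2}{p}$,
$$\frac{1+\lambda}{2}+\frac{1}{p}=\frac{2-\tfrac{2}{p}}{2}+\frac{1}{p}=1-\frac{1}{p}+\frac{1}{p}=1,$$
so the condition on $\mu$ becomes exactly that of being a $1$-Carleson measure. To conclude, I would upgrade the set inclusion $\Cu(H^{p})\subseteq L^{2,1-\frac{2}{p}}$ to boundedness of $\Cu:H^{p}\rightarrow L^{2,1-\frac{2}{p}}$ by the closed graph argument recalled in the introduction: for $2<p\le\infty$ both $H^{p}$ and $L^{2,1-\frac{2}{p}}$ are Banach spaces continuously embedded in $H(\dd)$, and $\Cu$ carries the first into the second, hence is bounded.

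I do not expect any genuine obstacle: the corollary is a routine specialization of Theorem 1.1, and the only things to verify are the elementary arithmetic above and the admissibility of $\lambda=1-\frac{2}{p}$ in the range $(0,1]$. It is worth treating the endpoint $p=\infty$ explicitly as a sanity check, where $\lambda=1$, $L^{2,1}=BMOA$, $\mathcal{B}^{\frac{3-\lambda}{2}}=\mathcal{B}$, and the statement recovers the known result that $\Cu$ maps $H^{\infty}$ into $BMOA$ if and only if $\mu$ is a $1$-Carleson measure.
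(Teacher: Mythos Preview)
Your proposal is correct and matches the paper's approach exactly: the corollary is stated immediately after Theorem 1.1 with the remark ``As a consequence of Theorem 1.1, we have the following result,'' and your specialization $\lambda=1-\tfrac{2}{p}$ together with the arithmetic $\tfrac{1+\lambda}{2}+\tfrac{1}{p}=1$ is precisely the intended derivation. The closed graph remark and the $p=\infty$ sanity check are appropriate and consistent with the paper's conventions.
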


\begin{proof of Theorem 1.2}
 (1). The proof of necessity is similar to that Theorem 1.1 and hence omitted. For  the sufficiency, it is suffices to show that $\Cu(\mathcal {B}^{1+\frac{1}{p}})\subseteq \Lambda^{q}_{\frac{1}{q}}$ when $\mu$ is an $1+\frac{1}{p}$-Carleson measure.

Notice that  (3.2) is  remain valid for all $f\in \mathcal {B}^{1+\frac{1}{p}}$. By (3.2) and  the Minkowski inequality,  Lemma \ref{lm2.5} and Proposition \ref{pro3.1}  we have
\begin{align*}
&\sup_{0<r<1}(1-r)^{1-\frac1q}\left(\frac{1}{2\pi}\int_0^{2\pi}|\Cu (f)'(re^{i\theta})|^q d\theta\right)^{\frac1q}\nonumber \\
\lesssim &\|f\|_{\mathcal {B}^{1+\frac{1}{p}}} \sup_{0<r<1}(1-r)^{1-\frac1q} \left(\frac{1}{2\pi}\int_0^{2\pi}\left(\int^{1}_{0} \frac{d\mu(t)}{|1-tre^{i\theta}|(1-tr)^{1+\delta_{p}}} \right)^q d\theta\right)^{\frac1q} \\
\lesssim&\|f\|_{\mathcal {B}^{1+\frac{1}{p}}}  \sup_{0<r<1}(1-r)^{1-\frac1q} \int^{1}_{0} \left(\frac{1}{2\pi}\int_0^{2\pi} \frac{1}{|1-tre^{i\theta}|^{q}(1-tr)^{q(1+\delta_{p})}}  d\theta\right)^{\frac1q}  d\mu(t)\\
\lesssim &\|f\|_{\mathcal {B}^{1+\frac{1}{p}}}  \sup_{0<r<1} \int^{1}_{0} \frac{(1-r)^{1-\frac1q}}{(1-tr)^{2+\delta_{p}-\frac{1}{q}}}d\mu(t)\\
\lesssim &\|f\|_{\mathcal {B}^{1+\frac{1}{p}}}.
\end{align*}
This gives $\Cu: \mathcal {B}^{1+\frac{1}{p}}\rightarrow \Lambda^{q}_{\frac{1}{q}}$ is bounded.

 (2). Suppose $\mu$ is a $1$-logarithmic $1+\frac{1}{p}$-Carleson measure. Let $H^{p} \subseteq X\subseteq\mathcal {B}^{1+\frac{1}{p}}$ and  $f\in X$,
then  $f\in X \subseteq\mathcal {B}^{1+\frac{1}{p}}$.
Using the integral representation of $\Cu$ we see that
$$\Cu(f)''(z)=\int_{0}^{1}\frac{t^{2}f''(tz)}{1-tz}d\mu(t)+2\int_{0}^{1}\frac{t^{2}f'(tz)}{(1-tz)^{2}}d\mu(t)+2\int_{0}^{1}\frac{t^{2}f(tz)}{(1-tz)^{3}}d\mu(t).\eqno{(3.4)}$$
 It follows from Lemma \ref{lm2.1} we have that
\[ \begin{split}
|\Cu(f)''(z)|&\lesssim ||f||_{\mathcal {B}^{1+\frac{1}{p}}}\int_{0}^{1}\left(\frac{(1-t|z|)^{-2-\frac{1}{p}}}{|1-tz|}+\frac{(1-t|z|)^{-1-\frac{1}{p}}}{|1-tz|^{2}}+\frac{(1-t|z|)^{-\frac{1}{p}}}{|1-tz|^{3}}\right)d\mu(t)\\
&\lesssim ||f||_{\mathcal {B}^{1+\frac{1}{p}}}\int_{0}^{1}\frac{1}{(1-t|z|)^{2+\frac{1}{p}}|1-tz|}d\mu(t).
  \end{split} \]
 By  Fubini's theorem, Lemma \ref{lm2.5} and Proposition \ref{pro3.1}, we have
    \[ \begin{split}
 & \ \ \ \ \sup_{0\leq r<1}(1-r)M_{1}(\Cu(f)'',r)\\
  & \lesssim  ||f||_{\mathcal {B}^{1+\frac{1}{p}}}\sup_{0\leq r<1}(1-r)\int_{0}^{2\pi}
  \int_{0}^{1}\frac{d\mu(t)}{(1-tr)^{2+\frac{1}{p}}|1-tre^{i\theta}|}d\theta\\
   &\lesssim ||f||_{\mathcal {B}^{1+\frac{1}{p}}} \sup_{0\leq r<1} \int_{0}^{1}\frac{1-r}{(1-tr)^{2+\frac{1}{p}}}\int_{0}^{2\pi}\frac{d\theta}{|1-tre^{i\theta}|}d\mu(t)\\
   &\lesssim ||f||_{\mathcal {B}^{1+\frac{1}{p}}} \sup_{0\leq r<1}\int_{0}^{1}\frac{(1-r)\log\frac{e}{1-tr}}{(1-tr)^{2+\frac{1}{p}}}d\mu(t)\\
&\lesssim ||f||_{\mathcal {B}^{1+\frac{1}{p}}}\lesssim ||f||_{X}.
    \end{split} \]
    This gives $\Cu:X\rightarrow \Lambda^{1}_{1}$ is bounded.
\end{proof of Theorem 1.2}
Theorem 1.1 and Theorem 1.2  lead to the following result.
\begin{corollary}
Suppose   $0<p\leq\infty$, $1<q<\infty$  and  $\mu$ is a  finite positive Borel measure on the interval $[0,1)$. Let $X$ be a subspace of $\hd$ with $\Lambda^{q}_{\frac{1}{q}}\subseteq X\subseteq \mathcal {B}$. Then $\Cu: H^{p}\rightarrow X$ is bounded if and only if $\mu$ is a $1+\frac{1}{p}$-Carleson measure.
\end{corollary}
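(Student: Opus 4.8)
\emph{Overall plan.} The corollary is essentially a specialization of Theorem 1.2(1) when $p$ is finite, and must be patched by a separate argument when $p=\infty$ (since Theorem 1.2 is stated only for $p<\infty$). So I would split the proof into two cases and, in the finite case, simply feed the right pair of spaces into Theorem 1.2.

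\emph{Case $0<p<\infty$.} The source space $H^p$ itself satisfies $H^p\subseteq H^p\subseteq\mathcal{B}^{1+\frac1p}$: the growth bound $|f(z)|\lesssim\|f\|_p(1-|z|)^{-1/p}$ (p.~36 of \cite{du}) gives $|f'(z)|\lesssim\|f\|_p(1-|z|)^{-1-1/p}$, hence $\|f\|_{\mathcal{B}^{1+\frac1p}}\lesssim\|f\|_p$. Since moreover $\Lambda^q_{\frac1q}\subseteq X\subseteq\mathcal{B}$ by hypothesis, Theorem 1.2(1) applied to the pair of spaces $(H^p,X)$ says precisely that $\Cu:H^p\to X$ is bounded if and only if $\mu$ is a $1+\frac1p$-Carleson measure. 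So in this range nothing is needed beyond substituting $H^p$ for the first space in Theorem 1.2; note that, since that statement is an ``if and only if'', both implications are obtained at once.

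\emph{Case $p=\infty$} (so $1+\frac1p=1$). For necessity, assume $\Cu:H^\infty\to X$ is bounded. Because $X\subseteq\mathcal{B}=\mathcal{B}^{\frac{3-1}{2}}$, I would rerun the computation from the first half of the proof of Theorem 1.1 with $\lambda=1$: the functions $f_a(z)=\frac{1-a}{1-az}$ lie in $H^\infty$ with $\sup_{0<a<1}\|f_a\|_\infty\lesssim1$, so $\Cu(f_a)\in X\subseteq\mathcal{B}$ forces $(1-a)|\Cu(f_a)'(a)|\lesssim1$, while evaluating $\Cu(f_a)'$ at $a$ via the integral formula and retaining the positive dominant term gives $\Cu(f_a)'(a)\gtrsim\mu([a,1))/(1-a)^2$; combining these yields $\mu([a,1))\lesssim1-a$, i.e.\ $\mu$ is a $1$-Carleson measure. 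For sufficiency, if $\mu$ is a $1$-Carleson measure then, by the theorem of Bao, Sun and Wulan \cite{bao} (see also \cite{blas}) recalled in the Introduction, $\Cu(H^\infty)\subseteq\bigcap_{r>1}\Lambda^r_{\frac1r}\subseteq\Lambda^q_{\frac1q}\subseteq X$, and since $X$ is continuously embedded in $H(\dd)$ the closed graph theorem gives that $\Cu:H^\infty\to X$ is bounded.

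\emph{Main obstacle.} The only genuinely delicate point is the case $p=\infty$, precisely because Theorem 1.2 excludes it. If one wants a self-contained sufficiency argument there instead of invoking \cite{bao}, one can repeat the computation in the sufficiency part of Theorem 1.2(1) with $H^\infty$ replacing $\mathcal{B}^{1+\frac1p}$; the subtlety to watch is that the Bloch form of estimate $(3.2)$ fails at $p=\infty$, since the bound $|f(z)|\lesssim\|f\|_{\mathcal{B}}\log\frac{2}{1-|z|}$ produces a logarithmic factor that cannot be absorbed into $|1-tz|(1-t|z|)$. For $f\in H^\infty$, however, $|f|\lesssim\|f\|_\infty$ removes that factor, one recovers $|\Cu(f)'(z)|\lesssim\|f\|_\infty\int_0^1\frac{d\mu(t)}{|1-tz|(1-t|z|)}$, and then Lemma \ref{lm2.5} together with Proposition \ref{pro3.1} yield $\sup_{0<r<1}(1-r)^{1-\frac1q}M_q(r,\Cu(f)')\lesssim\|f\|_\infty$, so that $\Cu(H^\infty)\subseteq\Lambda^q_{\frac1q}\subseteq X$ as required.
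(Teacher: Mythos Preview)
Your proposal is correct and follows the paper's route: the paper's entire proof of this corollary is the single sentence ``Theorem 1.1 and Theorem 1.2 lead to the following result,'' and your case split (Theorem 1.2(1) with the source space taken to be $H^p$ for $0<p<\infty$, plus a separate treatment of $p=\infty$) is exactly how one unpacks that sentence. Your discussion is in fact more careful than the paper's at the endpoint $p=\infty$: Theorem 1.1 with $\lambda=1$ only yields $\Cu(H^\infty)\subseteq BMOA$, which does not immediately give $\Cu(H^\infty)\subseteq X$ under the weaker hypothesis $\Lambda^q_{1/q}\subseteq X$, so either invoking \cite{bao} or rerunning the sufficiency computation of Theorem 1.2 with $\delta_p=0$ for $f\in H^\infty$ (as you do) is the right fix, and you correctly isolate why the $\mathcal{B}^{1+\frac1p}$ version of that computation breaks at $p=\infty$.
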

\begin{remark}
In \cite{blas},  Blasco proved that $\mathcal {C}_{\eta}: H^{1}\rightarrow \Lambda^{2}_{\frac{1}{2}}$ is bounded if and only if
$$\sup_{n\geq 0}(n+1)^{3}\sum_{k=n}^{\infty}|\eta_{k}|^{2}<\infty,\eqno{(3.5)}$$
where $\eta$ is a complex Borel measure on $[0,1)$. See  Theorem 3.7 in \cite{blas} for the detailed.
If  $\mu$ is a positive Borel measure on $[0,1)$, then Corollary 3.6 shows that $\mathcal {C}_{\mu}: H^{1}\rightarrow \Lambda^{2}_{\frac{1}{2}}$ is bounded if and only if  $\mu$ is a $2$-Carleosn measure. The condition (3.5)  is  equivalent to   $\mu$ is a $2$-Carleosn measure when $\mu$ is a positive Borel measure on $[0,1)$. In fact,
$$\infty> \sup_{n\geq 0}(n+1)^{3}\sum_{k=n}^{\infty}|\mu_{k}|^{2}
 \gtrsim  \sup_{n\geq 0}(n+1)^{3}\sum_{k=n}^{2n}\mu_{k}^{2}
 \gtrsim  \sup_{n\geq 0}(n+1)^{4}\mu^{2}_{2n}.$$
On the other hand, if  $\mu$ is a $2$-Carleosn measure,  we have
\[ \begin{split}
 \sup_{n\geq 0}(n+1)^{3}\sum_{k=n}^{\infty}|\mu_{k}|^{2} &\lesssim  \sup_{n\geq 0}(n+1)^{3}\sum_{k=n}^{\infty}\frac{1}{(k+1)^{4}}\\
 &\lesssim  \sup_{n\geq 0}(n+1)^{3}\int_{n+1}^{\infty} \frac{1}{x^{4}}dx \lesssim1. \square
 \end{split} \]
\end{remark}
For $0<\lambda< 1$, let $p=\frac{2}{1-\lambda}$ in Theorem 1.2, then we may obtain the boundedness of    $\Cu$ acting from  $L^{2,\lambda}$ to the mean Lipschitz space $\Lambda^{q}_{\frac{1}{q}}$.
\begin{corollary}
Suppose $0<\lambda<1$, $1<p<\infty$ and  $\mu$ is a  finite positive Borel measure on the interval $[0,1)$. Let $X$ be subspace of $\hd$ such that $\Lambda^{p}_{\frac{1}{p}}\subseteq X \subseteq \mathcal {B}$. Then $\Cu: L^{2,\lambda}\rightarrow X$ is bounded if and only if $\mu$ is a $\frac{3-\lambda}{2}$-Carleson measure.
\end{corollary}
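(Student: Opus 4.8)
The plan is to obtain this as a direct consequence of Theorem 1.2, via the substitution of parameters $p\mapsto \tfrac{2}{1-\lambda}$. First I would set $p_{0}=\tfrac{2}{1-\lambda}$; since $0<\lambda<1$ we have $2<p_{0}<\infty$, and an elementary computation gives $1+\tfrac{1}{p_{0}}=1+\tfrac{1-\lambda}{2}=\tfrac{3-\lambda}{2}$. Thus the exponent $\tfrac{3-\lambda}{2}$ in the statement is precisely the Bloch-type exponent $1+\tfrac{1}{p_{0}}$ attached to $p_{0}$, and the target exponent $\tfrac{3-\lambda}{2}$-Carleson measure is exactly a $\bigl(1+\tfrac{1}{p_{0}}\bigr)$-Carleson measure.

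Next I would recall the two inclusions already recorded in the introduction: by Lemma 2 of \cite{Liu} one has $H^{\frac{2}{1-\lambda}}\subseteq L^{2,\lambda}$, and by $(1.1)$ one has $L^{2,\lambda}\subseteq \mathcal{B}^{\frac{3-\lambda}{2}}$. In the notation above this reads $H^{p_{0}}\subseteq L^{2,\lambda}\subseteq \mathcal{B}^{1+\frac{1}{p_{0}}}$, which is exactly the hypothesis imposed on the source space in Theorem 1.2 with the parameter there chosen to be $p_{0}$. On the target side, the hypothesis of the present statement, namely $\Lambda^{p}_{\frac{1}{p}}\subseteq X\subseteq \mathcal{B}$ with $1<p<\infty$, is exactly the hypothesis imposed on the space $Y$ in Theorem 1.2 with the parameter $q$ there chosen to be our $p$.

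Therefore Theorem 1.2(1) applies with the role of $X$ there played by $L^{2,\lambda}$, the role of $Y$ there played by the space $X$ of the present statement, and the pair $(p,q)$ in Theorem 1.2 taken to be $(p_{0},p)$. It gives that $\Cu$ is bounded from $L^{2,\lambda}$ into $X$ if and only if $\mu$ is a $\bigl(1+\tfrac{1}{p_{0}}\bigr)$-Carleson measure, that is, a $\tfrac{3-\lambda}{2}$-Carleson measure, as claimed.

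There is essentially no obstacle here beyond bookkeeping: the only points to verify are the arithmetic identity $1+\tfrac{1}{p_{0}}=\tfrac{3-\lambda}{2}$, that $p_{0}\in(0,\infty)$ so that Theorem 1.2 is legitimately applicable, and that the range $1<p<\infty$ of the present statement matches the range $1<q<\infty$ required for the target space in Theorem 1.2. (If one wished to be self-contained one could instead invoke the sufficiency argument of Theorem 1.1, which already uses the embedding $L^{2,\lambda}\subseteq\mathcal{B}^{\frac{3-\lambda}{2}}$ together with the growth estimate, so that no further work on the closed-graph reduction is needed.)
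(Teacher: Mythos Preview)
Your proposal is correct and follows exactly the same route as the paper: the paper states just before this corollary that one takes $p=\frac{2}{1-\lambda}$ in Theorem~1.2, which is precisely your substitution $p_{0}=\frac{2}{1-\lambda}$ together with the inclusions $H^{p_{0}}\subseteq L^{2,\lambda}\subseteq \mathcal{B}^{\frac{3-\lambda}{2}}$ from the introduction. Your bookkeeping of the parameters and the identification of $(p_{0},p)$ with the $(p,q)$ of Theorem~1.2 are accurate.
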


\begin{proof of Theorem 1.3}
$(1)\Rightarrow (2)$. Let  $\Lambda_{1}^{1}\subseteq X\subseteq\mathcal {B}$ and $L^{2,\lambda}\subseteq Y\subseteq \mathcal {B}^{\frac{3-\lambda}{2}}$. It is easy to  check that  $g(z)=\log\frac{1}{1-z}\in X$ and
$$\Cu(g)(z)=\sum_{k=0}^{\infty}\mu_{k}\left(\sum_{n=1}^{k}\frac{1}{n}\right)z^{n}.$$
If $\Cu(X)\subseteq Y$, then $\Cu(g)\in Y\subseteq \mathcal {B}^{\frac{3-\lambda}{2}}$. It follows from Lemma \ref{lm2.1} that
$$\sum_{k=1}^{\infty}k\mu_{k}\left(\sum_{n=1}^{k}\frac{1}{n}\right)r^{n}\lesssim \frac{1}{(1-r)^{\frac{3-\lambda}{2}}}, \ \ r\in (0,1).$$
For $K\geq 2$ take $r_{k}=1-\frac{1}{K}$. Since the sequence $\{\mu_{k}\}$ is decreasing,
simple estimations lead us to the following
\[ \begin{split}
K^{\frac{3-\lambda}{2}} &\gtrsim \sum_{k=1}^{\infty}k\mu_{k}\left(\sum_{n=1}^{k}\frac{1}{n}\right)r^{n}_{K}\\
 &\gtrsim \sum_{k=1}^{K}k\mu_{k}\left(\sum_{n=1}^{k}\frac{1}{n}\right)r^{n}_{K}\\
& \gtrsim   \sum_{k=1}^{K}k\mu_{k}\log kr^{n}_{K}\\
& \gtrsim \mu_{K} \sum_{k=1}^{K}k\log k \\
& \asymp  \mu_{K}K^{2}\log K.
  \end{split} \]
Hence $\mu_{K}\lesssim \frac{1}{K^{\frac{1+\lambda}{2}}\log K}$ which implies that $\mu$ is a $1$-logarithmic $\frac{1+\lambda}{2}$-Carleson measure.

$(2)\Rightarrow (1)$. Assume that $\mu$ is a $1$-logarithmic $\frac{1+\lambda}{2}$-Carleson measure. It suffices to show that $\Cu: \mathcal {B}\rightarrow L^{2,\lambda}$ is bounded.
Let $f\in \mathcal {B}$, it is clear that
\[ \begin{split}
|\Cu (f)'(z)|&\leq \int_{0}^{1}\frac{|tf'(tz)|}{|1-tz|}d\mu(t)+ \int_{0}^{1}\frac{|tf(tz)|}{|1-tz|^{2}}d\mu(t)\\
&\leq ||f||_{\mathcal {B}}\int_{0}^{1}\frac{d\mu(t)}{|1-tz|(1-t|z|)}+ ||f||_{\mathcal {B}} \int_{0}^{1}\frac{\log\frac{e}{1-t|z|}}{|1-tz|^{2}}d\mu(t)\\
  \end{split} \]
  This gives
  \[ \begin{split}
 & ||\Cu(f)||_{L^{2.\lambda}}\asymp \sup_{w\in \dd}\left((1-|w|^{2})^{1-\lambda}\int_{\dd}|\Cu(f)'(z)|^{2}(1-|\sigma_{w}(z)|^{2})dA(z)\right)^{\frac{1}{2}}\\
&  \lesssim  ||f||_{\mathcal {B}}\sup_{w\in \dd}(1-|w|^{2})^{\frac{2-\lambda}{2}}\left(\int_{\dd}
  \left(\int_{0}^{1}\frac{d\mu(t)}{|1-tz|(1-t|z|)}\right)^{2}\frac{1-|z|^{2}}{|1-z\overline{w}|^{2}}dA(z)\right)^{\frac{1}{2}}\\
 & +  ||f||_{\mathcal {B}}\sup_{w\in \dd}(1-|w|^{2})^{\frac{2-\lambda}{2}}\left(\int_{\dd}
  \left(\int_{0}^{1}\frac{\log\frac{e}{1-t|z|}d\mu(t)}{|1-tz|^{2}}\right)^{2}\frac{1-|z|^{2}}{|1-z\overline{w}|^{2}}dA(z)\right)^{\frac{1}{2}}\\
 & := E_{1} +E_{2}.
    \end{split} \]
Since $\mu$ is a  $1$-logarithmic $\frac{1+\lambda}{2}$-Carleson measure, by the Minkowski inequality,  Lemma \ref{lm2.4}  and Proposition \ref{pro3.1}, we have
  \[ \begin{split}
 E_{2}&:=||f||_{\mathcal {B}}\sup_{w\in \dd}(1-|w|^{2})^{\frac{2-\lambda}{2}}\left(\int_{\dd}
  \left(\int_{0}^{1}\frac{\log\frac{e}{1-t|z|}d\mu(t)}{|1-tz|^{2}}\right)^{2}\frac{1-|z|^{2}}{|1-z\overline{w}|^{2}}dA(z)\right)^{\frac{1}{2}}\\
  & \leq ||f||_{\mathcal {B}}\sup_{w\in \dd}(1-|w|^{2})^{\frac{2-\lambda}{2}}\int_{0}^{1}\left(\int_{\dd}\frac{\log^{2}\frac{e}{1-|z|}(1-|z|^{2})dA(z)}{|1-tz|^{4}|1-z\overline{w}|^{2}}\right)^{\frac{1}{2}}
d\mu(t)\\
&\asymp ||f||_{\mathcal {B}}\sup_{w\in \dd}\int_{0}^{1}\frac{(1-|w|^{2})^{\frac{2-\lambda}{2}}\log\frac{e}{1-t}}{(1-t)^{\frac{1}{2}}|1-t\overline{w}|}d\mu(t)\\
&\leq  ||f||_{\mathcal {B}}\sup_{w\in \dd}\int_{0}^{1}\frac{(1-|w|^{2})^{\frac{2-\lambda}{2}}\log\frac{e}{1-t}}{(1-t)^{\frac{1}{2}}(1-t|w|)^{\frac{1+\lambda}{2}+\frac{2-\lambda}{2}-\frac{1}{2}}}d\mu(t)\\
&\lesssim ||f||_{\mathcal {B}}.
   \end{split} \]
 Note that   $\mu$ is also a $\frac{1+\lambda}{2}$-Carleson measure. Arguing as the proof of  Theorem 1.1 (the case of $\delta_{p}=0$) we may obtain that
$$E_{1}:=||f||_{\mathcal {B}}\sup_{w\in \dd}(1-|w|^{2})^{\frac{2-\lambda}{2}}\left(\int_{\dd}
  \left(\int_{0}^{1}\frac{d\mu(t)}{|1-tz|(1-t|z|)}\right)^{2}\frac{1-|z|^{2}}{|1-z\overline{w}|^{2}}dA(z)\right)^{\frac{1}{2}}\lesssim ||f||_{\mathcal {B}}.$$
Therefore, we deduce that
    $$||\Cu(f)||_{L^{2,\lambda}} \lesssim   ||f||_{\mathcal {B}} .$$

\comment{
Recall that $\mu$ being a $1$-logarithmic $\frac{1+\lambda}{2}$-Carleson measure is equivalent to
$$\mu_{n}=O\left(\frac{1}{n^{\frac{1+\lambda}{2}}\log(n+1)}\right).$$
Let $f\in X$, $ f(z)=\sum_{k=0}^{\infty}\hat{f}(k)z^{k}$. Since $X\subseteq \mathcal {B}$, we have that $f\in \mathcal {B}$. By Lemma  we have
$$\left|\sum_{k=0}^{n}\hat{f}(k)\right|\lesssim ||f||_{\mathcal {B}}\log(n+1).$$
Since $L^{2,\lambda}\subset Y\subset \mathcal {B}^{\frac{3-\lambda}{2}}$, it suffices to prove that $\Cu(f)\in L^{2,\lambda}$.
Theorem  in []  shows that an analytic  function   $g\in L^{2,\lambda}$ if and only if
$$\sup_{w\in\dd}\sum_{n=0}^{\infty}\frac{(1-|w|^{2})^{2-\lambda}}{(n+1)^{2}}\left|\sum_{k=0}^{n}(k+1)\hat{g}(k+1)\overline{w}^{n-k}\right|^{2}<\infty.$$
By (),() and Cauchy-Schwarz  inequality we have
\begin{align*}
&||\Cu(f)||^{2}_{L^{2,\lambda}} \asymp \sup_{w\in\dd}\sum_{n=0}^{\infty}\frac{(1-|w|^{2})^{2-\lambda}}{(n+1)^{2}}\left|\sum_{k=0}^{n}(k+1)\mu_{k+1}
\left(\sum_{j=0}^{k+1}\hat{f}(j)\right)\overline{w}^{n-k}\right|^{2}\\
&\lesssim ||f||^{2}_{\mathcal {B}}\sup_{w\in\dd}\sum_{n=0}^{\infty}\frac{(1-|w|^{2})^{2-\lambda}}{(n+1)^{2}}
\left(\sum_{k=0}^{n}(k+1)\mu_{k+1}\log(k+1)|w|^{n-k}\right)^{2}\\
&\lesssim ||f||^{2}_{\mathcal {B}}\sup_{w\in\dd}\sum_{n=0}^{\infty}\frac{(1-|w|^{2})^{2-\lambda}}{(n+1)^{2}}
\left(\sum_{k=0}^{n}(k+1)^{\frac{1-\lambda}{2}}|w|^{n-k}\right)^{2}\\
&=  ||f||^{2}_{\mathcal {B}}\sup_{w\in\dd}\sum_{n=0}^{\infty}\frac{(1-|w|^{2})^{2-\lambda}}{(n+1)^{2}}
\left(\sum_{l=0}^{n}(n+1-l)^{\frac{1-\lambda}{2}}|w|^{l}\right)^{2}\\
& \leq  ||f||^{2}_{\mathcal {B}}\sup_{w\in\dd}\sum_{n=0}^{\infty}\frac{(1-|w|^{2})^{2-\lambda}}{(n+1)^{2}}
\left(\sum_{l=0}^{n}(n+1-l)\right)\left(\sum_{l=0}^{n}(n+1-l)^{-\lambda}|w|^{2l}\right)\\
&\lesssim  ||f||^{2}_{\mathcal {B}}\sup_{w\in\dd} (1-|w|^{2})^{2-\lambda}\sum_{n=0}^{\infty}\sum_{l=0}^{n}(n+1-l)^{-\lambda}|w|^{2l}\\
 \end{align*}
 Using rearrangement inequality  and  Abel's summation formula
  we get
\[ \begin{split}
&\sup_{w\in\dd} (1-|w|^{2})^{2-\lambda}\sum_{n=0}^{\infty}\sum_{l=0}^{n}(n+1-l)^{-\lambda}|w|^{2l}\\
&\leq \sup_{w\in\dd} (1-|w|^{2})^{2-\lambda}\sum_{n=0}^{\infty}\sum_{l=0}^{n}(l+1)^{-\lambda}|w|^{2l}\\
&=\sup_{w\in\dd} (1-|w|^{2})^{2-\lambda}\sum_{n=0}^{\infty}
\left(\sum_{l=0}^{n}(l+1)^{-\lambda}|w|^{2n}+\sum_{l=0}^{n-1}\sum_{j=0}^{l}(j+1)^{-\lambda}|w|^{2l}(1-|w|^{2})\right)\\
    \end{split} \]
}


    The proof is complete.
\end{proof of Theorem 1.3}

\begin{corollary}
Suppose $0<\lambda\leq1$ and  $\mu$ is a  finite positive Borel measure on the interval $[0,1)$. Let $ X$ be subspace
of  $H(\dd)$ such that $\Lambda^{1}_{1}\subseteq X\subseteq \mathcal {B}$. Then $\Cu: X\rightarrow  L^{2,\lambda}$ is bounded if and only if $\mu$ is a $1$-logarithmic$\frac{1+\lambda}{2}$-Carleson measure.
\end{corollary}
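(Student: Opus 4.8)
The plan is to read this corollary off from Theorem 1.3 as the special case $Y=L^{2,\lambda}$, since essentially all of the analytic work has already been carried out there. First I would check that $Y=L^{2,\lambda}$ satisfies the hypotheses imposed on $Y$ in Theorem 1.3, namely $L^{2,\lambda}\subseteq L^{2,\lambda}\subseteq\mathcal{B}^{\frac{3-\lambda}{2}}$. The left inclusion is trivial, and the right inclusion $L^{2,\lambda}\subseteq\mathcal{B}^{\frac{3-\lambda}{2}}$, valid for all $0<\lambda\leq1$, was recorded in Section 1 as a consequence of the growth estimate (1.1) (and of $L^{2,1}=BMOA\subseteq\mathcal{B}$ at the endpoint $\lambda=1$). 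Hence Theorem 1.3 applies verbatim with this choice of $Y$, and its equivalence $(1)\Leftrightarrow(2)$ becomes precisely the assertion of the corollary: $\Cu:X\to L^{2,\lambda}$ is bounded if and only if $\mu$ is a $1$-logarithmic $\frac{1+\lambda}{2}$-Carleson measure.

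If a self-contained argument is preferred, both implications can be lifted from the proof of Theorem 1.3. For necessity one tests on $g(z)=\log\frac{1}{1-z}\in\Lambda^{1}_{1}\subseteq X$: then $\Cu(g)\in L^{2,\lambda}\subseteq\mathcal{B}^{\frac{3-\lambda}{2}}$, and since the $k$-th Taylor coefficient of $\Cu(g)$ is $\mu_k\sum_{n=1}^{k}\frac{1}{n}\gtrsim\mu_k\log k\geq0$, inserting $\Cu(g)$ into Lemma \ref{lm2.1} and running the Abel-summation estimate of Theorem 1.3 gives $\mu_K\lesssim\frac{1}{K^{\frac{1+\lambda}{2}}\log K}$; by the moment characterization of logarithmic Carleson measures (the logarithmic version of Lemma \ref{lm2.3}, cf. the remark following Proposition \ref{pro3.1}) this says that $\mu$ is a $1$-logarithmic $\frac{1+\lambda}{2}$-Carleson measure. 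For sufficiency, since $X\subseteq\mathcal{B}$ the closed graph theorem reduces matters to showing $\Cu:\mathcal{B}\to L^{2,\lambda}$ is bounded, and this is exactly the $(2)\Rightarrow(1)$ half of the proof of Theorem 1.3, where $\|\Cu(f)\|_{L^{2,\lambda}}$ is split into the two square functions $E_1,E_2$ and each is estimated by $\|f\|_{\mathcal{B}}$ via Minkowski's inequality, Lemma \ref{lm2.4} and Proposition \ref{pro3.1}.

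There is no genuine obstacle here: the technical ingredients — the area-integral (square-function) description of $L^{2,\lambda}$, the control of the extra factor $\log\frac{e}{1-t|z|}$ arising from $\int_{0}^{1}\frac{tf(tz)}{(1-tz)^{2}}\,d\mu(t)$ when $f\in\mathcal{B}$, and the bookkeeping converting coefficient decay into the $1$-logarithmic $\frac{1+\lambda}{2}$-Carleson condition — are all already present in Theorem 1.3. The one point worth flagging explicitly is the endpoint $\lambda=1$, where $L^{2,1}=BMOA$ and $\mathcal{B}^{(3-1)/2}=\mathcal{B}$, so the statement specializes to the known fact that $\Cu:X\to BMOA$ is bounded if and only if $\mu$ is a $1$-logarithmic $1$-Carleson measure.
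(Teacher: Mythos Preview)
Your proposal is correct and matches the paper's approach exactly: the paper states this corollary immediately after the proof of Theorem 1.3 without a separate proof, treating it as the special case $Y=L^{2,\lambda}$ (with the inclusion $L^{2,\lambda}\subseteq\mathcal{B}^{\frac{3-\lambda}{2}}$ already recorded in the introduction). Your additional self-contained outline simply reproduces the two halves of the proof of Theorem 1.3, so there is nothing to add.
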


\begin{proof of Theorem 1.4}
$(1)\Rightarrow (2)$. Arguing as the   proof of Theorem 1.3 we may obtain that $\mu$ is  a $1$-logarithmic $1$-Carleson measure.

$(2)\Rightarrow (1)$. Suppose  $\mu$ is  a $1$-logarithmic $1$-Carleson measure and  $\Lambda^{1}_{1}\subseteq X,Y\subseteq \mathcal {B}$.
Note that $f\in X\subseteq \mathcal {B}$, by (3.5) we have
\[ \begin{split}
|\Cu(f)''(z)|&\lesssim ||f||_{\mathcal {B}}\int_{0}^{1}\left(\frac{(1-t|z|)^{-2}}{|1-tz|}+\frac{(1-t|z|)^{-1}}{|1-tz|^{2}}+\frac{\log\frac{e}{1-t|z|}}{|1-tz|^{3}}\right)d\mu(t)\\
&\lesssim ||f||_{\mathcal {B}}\left(\int_{0}^{1}\frac{d\mu(t)}{(1-t|z|)^{2}|1-tz|}+\int_{0}^{1}\frac{\log\frac{e}{1-t|z|}d\mu(t)}{|1-tz|^{3}}\right).
  \end{split} \]
 Using (1.2) and  Fubini's theorem, Lemma \ref{lm2.5} and Proposition \ref{pro3.1}, we have
  \[ \begin{split}
 & \ \ \ \ \sup_{0\leq r<1}(1-r)M_{1}(\Cu(f)'',r)\\
  & \lesssim  ||f||_{\mathcal {B}}\sup_{0\leq r<1}(1-r)\int_{0}^{2\pi}
  \int_{0}^{1}\frac{d\mu(t)}{(1-tr)^{2}|1-tre^{i\theta}|}d\theta\\
  & + ||f||_{\mathcal {B}}\sup_{0\leq r<1}(1-r)\int_{0}^{2\pi}\int_{0}^{1}\frac{\log\frac{e}{1-tr}d\mu(t)}{|1-tre^{i\theta}|^{3}}d\theta\\
  &\lesssim ||f||_{\mathcal {B}} \sup_{0\leq r<1} \int_{0}^{1}\frac{1-r}{(1-tr)^{2}}\int_{0}^{2\pi}\frac{d\theta}{|1-tre^{i\theta}|}d\mu(t)\\
& \ \ \ \ +  ||f||_{\mathcal {B}}\sup_{0\leq r<1}\int_{0}^{1}(1-r)\log\frac{e}{1-t}\int_{0}^{2\pi}\frac{d\theta}{|1-tre^{i\theta}|^{3}}d\mu(t)\\
&\lesssim ||f||_{\mathcal {B}} \sup_{0\leq r<1}\int_{0}^{1}\frac{(1-r)\log\frac{e}{1-t}}{(1-tr)^{2}}d\mu(t)\\
&\lesssim ||f||_{\mathcal {B}}.
    \end{split} \]
    This yields  that $\Cu(f)\in\Lambda^{1}_{1}\subseteq Y$.
\end{proof of Theorem 1.4}
Note that the spaces $\Lambda^{p}_{\frac{1}{p}}$ for $1\leq p<\infty$, the spaces $Q_{p}$ for all  $0< p<\infty$  are satisfied the condition in Theorem 1.4.
Therefore, we may obtain  a number of results.

\begin{proof of Theorem 1.5}
$(1)\Rightarrow(2)$. Let $f(z)=(1-z)^{-\frac{1-\lambda_{1}}{2}}$, then Lemma \ref{lm2.6} shows that $f\in L^{2,\lambda_{1}}\subseteq X$. Note that  $\Cu(f)\in Y\subseteq \mathcal {B}^{\frac{3-\lambda_{2}}{2}}$ and
$$\Cu(f)(z)=\sum_{k=0}^{\infty}\mu_{k}\left(\sum_{j=0}^{k}\frac{\Gamma(\frac{1-\lambda_{1}}{2}+j)}{\Gamma(\frac{1-\lambda_{1}}{2})\Gamma(j+1)}\right)z^{k},$$
By the Stirling formula,
$$
\frac{\Gamma(j+\frac{1-\lambda_{1}}{2})}{\Gamma(\frac{1-\lambda_{1}}{2})\Gamma(j+1)}\asymp (j+1)^{-\frac{1+\lambda_{1}}{2}}
$$
for all nonnegative integers $j$. This together with $\{\mu_{k}\}$ is decreasing with $k$ and Lemma \ref{lm2.2}  we deduce that
\[ \begin{split}
1 &\gtrsim n^{-\frac{3-\lambda_{2}}{2}}\sum_{k=1}^{n}k\mu_{k}\left(\sum_{j=0}^{k}\frac{\Gamma(\frac{1-\lambda_{1}}{2}+j)}{\Gamma(\frac{1-\lambda_{2}}{2})\Gamma(j+1)}\right)\\
&\gtrsim n^{-\frac{3-\lambda_{2}}{2}} \sum_{k=1}^{n}k\mu_{k}\left(\sum_{j=0}^{k}(j+1)^{-\frac{1+\lambda_{1}}{2}}\right)\\
&\gtrsim  n^{-\frac{3-\lambda_{2}}{2}} \mu_{n}\sum_{k=1}^{n}k^{\frac{3-\lambda_{1}}{2}}\\
&\gtrsim  \mu_{n}n^{1+\frac{\lambda_{2}-\lambda_{1}}{2}}.
  \end{split} \]
  Lemma \ref{lm2.3} shows that $\mu$ is a $1+\frac{\lambda_{2}-\lambda_{1}}{2}$-Carleson measure.

$(2)\Rightarrow(1)$. It suffices to prove that $\Cu:\mathcal {B}^{\frac{3-\lambda_{1}}{2}}\rightarrow L^{2,\lambda_{2}}$ is bounded. Let $f\in \mathcal {B}^{\frac{3-\lambda_{2}}{2}}$, then  (1.3) and  Lemma \ref{lm2.1}  imply   that
$$|\Cu(f)'(z)|\lesssim ||f||_{\frac{3-\lambda_{1}}{2}}\int_{0}^{1}\frac{d\mu(t)}{(1-t|z|)^{\frac{3-\lambda_{1}}{2}}|1-tz|}.$$
Then arguing as the proof of Theorem 1.1 we can get the desired result.

The proof is complete.
\end{proof of Theorem 1.5}
\begin{corollary}
Suppose   $0<\lambda< 1$, $\mu$ is a  finite positive Borel measure on the interval $[0,1)$. Then $\Cu$ is a bounded operator on $L^{2,\lambda}$ if and only if  $\mu$ is a $1$-Carleson measure.
\end{corollary}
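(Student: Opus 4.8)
The plan is to obtain this corollary as the diagonal case of Theorem 1.5. Concretely, I would apply Theorem 1.5 with the choices $\lambda_{1}=\lambda_{2}=\lambda$ and $X=Y=L^{2,\lambda}$. The only thing that needs checking is that these choices satisfy the hypotheses of Theorem 1.5: we need $0<\lambda_{1}<1$ and $0<\lambda_{2}\leq 1$, which hold because $0<\lambda<1$; and we need the nestings $L^{2,\lambda_{1}}\subseteq X\subseteq\mathcal{B}^{\frac{3-\lambda_{1}}{2}}$ and $L^{2,\lambda_{2}}\subseteq Y\subseteq\mathcal{B}^{\frac{3-\lambda_{2}}{2}}$. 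With our choices both chains read $L^{2,\lambda}\subseteq L^{2,\lambda}\subseteq\mathcal{B}^{\frac{3-\lambda}{2}}$; the first inclusion is trivial and the second is the inclusion $L^{2,\lambda}\subseteq\mathcal{B}^{\frac{3-\lambda}{2}}$ recorded in the introduction, a consequence of the pointwise growth estimate $(1.1)$.

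With the hypotheses verified, Theorem 1.5 gives that $\Cu$ is bounded on $L^{2,\lambda}$ if and only if $\mu$ is a $\bigl(1+\frac{\lambda_{2}-\lambda_{1}}{2}\bigr)$-Carleson measure. Since $\lambda_{1}=\lambda_{2}$, the exponent is exactly $1$, so this is precisely the assertion that $\mu$ is a $1$-Carleson measure, which completes the argument.

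There is essentially no obstacle here: the corollary is the special case $\lambda_{1}=\lambda_{2}$ of the already-established Theorem 1.5, and the only (purely formal) point is the verification that $L^{2,\lambda}$ lies between $L^{2,\lambda}$ and $\mathcal{B}^{\frac{3-\lambda}{2}}$, which is immediate. If one preferred to avoid citing Theorem 1.5, one could instead re-run its proof in this diagonal situation: deduce necessity by testing on $f(z)=(1-z)^{-\frac{1-\lambda}{2}}\in L^{2,\lambda}$ (Lemma \ref{lm2.6}) together with Lemma \ref{lm2.2} and Lemma \ref{lm2.3}, and deduce sufficiency from the boundedness $\Cu:\mathcal{B}^{\frac{3-\lambda}{2}}\rightarrow L^{2,\lambda}$, which in turn follows from the pointwise bound on $\Cu(f)'$ furnished by Lemma \ref{lm2.1}, Minkowski's inequality, the integral estimate Lemma \ref{lm2.4} and Proposition \ref{pro3.1}, exactly as in the proof of Theorem 1.1. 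Invoking Theorem 1.5 directly is the cleaner route.
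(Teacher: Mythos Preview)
Your proposal is correct and matches the paper's approach: the corollary is stated immediately after Theorem~1.5 without a separate proof, precisely because it is the diagonal case $\lambda_{1}=\lambda_{2}=\lambda$, $X=Y=L^{2,\lambda}$ of that theorem, exactly as you argue.
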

\comment{
\begin{corollary}
Suppose   $0<\lambda< 1$,  $\mu$ is a  finite positive Borel measure on the interval $[0,1)$. Then the following statements are equivalent.
\\(1)\ $\Cu: L^{2,\lambda}\rightarrow BMOA$ is bounded.
\\(2)\ $\Cu: L^{2,\lambda}\rightarrow\mathcal {B}$ is bounded.
\\(3)\ $\mu$ is a $\frac{3-\lambda}{2}$-Carleson measure.
\end{corollary}
}

\comment{
\section{Ces\`{a}ro-like operator acting on  some analytic functions}


Recall that the analytic Wiener algebra $\mathcal {A}$ is defined as
$$\mathcal {A}=\{f(z)=\sum_{k=0}^{\infty}\widehat{f}(k)z^{k}\in\hd: ||f||_{\mathcal {A}}=\sum_{k=0}^{\infty}|\widehat{f}(k)|<\infty\}.$$
It is obvious that $\mathcal {A}\subsetneq H^{\infty}$.

Let  $1\leq p\leq\infty$, the space  $B_{p}$  consists of all functions $f(z)=\sum_{k=0}^{\infty}\widehat{f}(k)z^{k}\in \hd$ with
$$\sum_{k=1}^{\infty}k^{p-1}|\widehat{f}(k)|^{p}<\infty.$$
The space $B_{p}$  with the norm
$$||f||_{B_{p}}=\left(|\widehat{f}(0)|+\sum_{k=1}^{\infty}k^{p-1}|\widehat{f}(k)|^{p}\right)^{\frac{1}{p}}$$
is a Banach space.

The $B_{\infty}$ should be interpreted as the space of all $f\in \hd$ with $|\widehat{f}(k)|=O(\frac{1}{k})$ (normed by $||f||_{B_{\infty}}=|\widehat{f}(0)|+\sup_{k\geq 1}k|\widehat{f}(k)|$). Note that $B_{\infty} \subsetneq \mathcal {B}$.  It is clear that  $B_{1}$ is just  the analytic Wiener algebra $\mathcal {A}$. The space  $B_{2}$ is the classic Dirichlet space. 

 The space $B_{p}$ is closely related to many classic spaces, for example the Wiener  algebra  $\mathcal {A}$, Bloch space, Dirichlet space, Besov space.  In 2022, Gerlach-Mena and  M\"{u}ller [] systematically investigated the various properties of the space $B_{p}$. In this section, we focus on the boundedness of $\Cu$ acting on $B_{p}$.

\begin{theorem}
Suppose  $1\leq p<\infty$ and  $\mu$ is a  finite positive Borel measure on the interval $[0,1)$. Let $ X$ be subspace
of  $H(\dd)$ satisfies $X\subseteq \mathcal {B}$, $X $ contains all polynomials and $f(z)=\log\frac{1}{1-z}\in X$.  Then the following conditions are equivalent.
\\ (1)  \ $\Cu: H^{\infty}\rightarrow B_{p}$ is bounded.
\\ (2) \ $\Cu: X\rightarrow B_{p}$  is bounded.
\\ (3)\ The measure $\mu$  satisfies
$$\sum_{k=1}^{\infty}\mu_{k}^{p}k^{p-1}\log^{p}(k+1)<\infty$$
\end{theorem}
\begin{proof}
$(3)\Rightarrow(1)$.  Corollary 4 in [] show that
  $$\left|\sum_{k=1}^{n}\widehat{f}(k)\right|\lesssim\log(n+1)||f||_{\mathcal {B}}\ \mbox{for all}\ f\in \mathcal {B}.$$
Thus, for any $f\in H^{\infty}$ with $f(0)=0$ we have
\[ \begin{split}
||\Cu(f)||^{p}_{B_{p}}&=\sum^{\infty}_{n=1}k^{p-1}\left|\mu_{k}\left(\sum_{j=1}^{k}\widehat{f}(j)\right)\right|^{p}\\
& \lesssim  ||f||_{\mathcal {B}}\sum_{k=1}^{\infty}\mu_{k}^{p}k^{p-1}\log^{p}(k+1)\\
& \lesssim  ||f||_{H^{\infty}}\sum_{k=1}^{\infty}\mu_{k}^{p}k^{p-1}\log^{p}(k+1)\\
& \lesssim  ||f||_{H^{\infty}}.
\end{split} \]
This implies $(1)$.

$(3)\Rightarrow(2)$.  This can be done by modifying the proof of  $(3)\Rightarrow(1)$ because  ()  is valid for all $f\in \mathcal {B}$.

Now, there are $(1)\Rightarrow(3)$ and $(2)\Rightarrow(3)$  remaining.

 $(1)\Rightarrow(3)$. Theorem 1 in []  tell us that  there exists a function $\varphi\in H^{\infty}$ such that
 $$\left|\sum_{k=1}^{n}\widehat{\varphi}(k)\right|\geq \frac{2}{e}\sum_{k=0}^{n-1}\left(\frac{1\cdot 3\ldots  (2k+1) }{2\cdot 4 \ldots  (2k+2) }\right)^{2}$$
Let $g(z)=\varphi(z)-\varphi(0)$, then
\[ \begin{split}
1&\gtrsim||g||_{H^{\infty}}\gtrsim||\Cu(g)||^{p}_{B_{p}}=\sum_{n=1}^{\infty}n^{p-1}\left|\widehat{\Cu(g)}(n)\right|^{p}\\
&= \sum_{n=1}^{\infty}\mu_{n}^{p}n^{p-1}\left|\sum_{k=1}^{n}\widehat{g}(k)\right|^{p}
=\sum_{n=1}^{\infty}\mu_{n}^{p}n^{p-1}\left|\sum_{k=1}^{n}\widehat{\varphi}(k)\right|^{p}\\
&\gtrsim \sum_{n=1}^{\infty}\mu_{n}^{p}n^{p-1}\left(\sum_{k=0}^{n-1}\left(\frac{1\cdot 3\ldots  (2k+1) }{2\cdot 4 \ldots  (2k+2) }\right)^{2}\right)^{p}\\
& \gtrsim \sum_{n=1}^{\infty}\mu_{n}^{p}n^{p-1}\log^{p}(n+1).
  \end{split} \]
 This gives (3).

  $(2)\Rightarrow(3)$. By the assumptions of $X$, the  function $f(z)=\log\frac{1}{1-z}\in X$. This shows that
  $$\Cu(f)(z)=\sum_{k=0}^{\infty}\mu_{k}\left(\sum_{n=1}^{k}\frac{1}{n}\right)z^{n}.$$
  The rest of the proof is straightforward and therefore omitted.
\end{proof}

Observe  that $B_{\infty}$   satisfies the conditions of  $X$.  Let $p=1$  in the above Theorem, we may  obtain  the following corollary.
\begin{corollary}
Suppose  $1\leq p<\infty$ and  $\mu$ is a  finite positive Borel measure on the interval $[0,1)$. Let $ X$ be subspace
of  $H(\dd)$ satisfies $X\subseteq \mathcal {B}$, $X $ contains all polynomials and $f(z)=\log\frac{1}{1-z}\in X$. Then the following statements are equivalent.
\\ (1)  \ $\Cu: H^{\infty}\rightarrow\mathcal {A}$ is bounded.
\\ (2) \ $\Cu: X \rightarrow \mathcal {A}$  is bounded.
\\ (3)\ The measure $\mu$ satisfies
$$\int_{0}^{1}\frac{\log\frac{e}{1-t}}{1-t}d\mu(t)<\infty.$$
\\ (4)\ The measure $\mu$ satisfies
$$\sum_{k=1}^{\infty}\mu_{k}\log(k+1)<\infty$$
\end{corollary}

In [], the authors give the following result.

{\bf Theorem A} Let $\mu$ be  a positive Borel measure defined on $[0, 1)$. Then $\sum_{n=0}^{\infty}\mu_{n}<\infty\Leftrightarrow\Cu(H^{\infty} )\subseteq H^{\infty}\Leftrightarrow \int_{0}^{1}\frac{d\mu(t)}{1-t}<\infty$.

Blasco [] also showed that  $\Cu(H^{\infty} )\subseteq \mathcal {A}\Leftrightarrow \int_{0}^{1}\frac{d\mu(t)}{1-t}<\infty$, see Corollary 4.12 in [].
Our results show that  this is not true. However, we have $ \Cu(\mathcal {A})\subseteq \mathcal {A}\Leftrightarrow \int_{0}^{1}\frac{d\mu(t)}{1-t}<\infty\Leftrightarrow \sum_{n=0}^{\infty}\mu_{n}<\infty$.
In fact, assume that $\sum_{n=0}^{\infty}\mu_{n}<\infty$, then  for each $f\in \mathcal {A}$
$$||\Cu(f)||_{\mathcal {A}}=\sum_{n=0}^{\infty}\mu_{n}\left|\sum_{j=0}^{n}\widehat{f}(j)\right|\leq ||f||_{\mathcal {A}}\sum_{n=0}^{\infty}\mu_{n}\lesssim ||f||_{\mathcal {A}} .$$
On the other hand, we only need to take $1\in \mathcal {A}$ to prove that
 $\sum_{n=0}^{\infty}\mu_{n}<\infty$.

Modifying the proofs  of Theorem 4.1, we can obtain the following theorem. The detailed proofs are omitted.
\begin{theorem}
Suppose   $\mu$ is a  finite positive Borel measure on the interval $[0,1)$. Let $ X$ be subspace
of  $H(\dd)$ satisfies $X\subseteq \mathcal {B}$, $X $ contains all polynomials and $f(z)=\log\frac{1}{1-z}\in X$.  Then the following conditions are equivalent.
\\ (1)  \ $\Cu: H^{\infty}\rightarrow B_{\infty}$ is bounded.
\\ (2) \ $\Cu: X\rightarrow B_{\infty}$  is bounded.
\\ (3)\ $\mu$ is a $1$-logarithmic $1$ Carleson measure
\\ (4)\ The measure $\mu$ satisfies
$$\sup_{k\geq 1}\mu_{k}k\log(k+1)<\infty.$$
\end{theorem}

We now study the behavior of $\Cu$ acting on $B_{p}$ when $1<p<\infty$.
\begin{theorem}
Suppose $1<p<\infty$ and   $\mu$ is a  finite positive Borel measure on the interval $[0,1)$. Then the following statements hold.
\\ (1)\ If $\sum_{k=1}^{\infty}\mu_{k}^{p}k^{p-1}\log^{p-1}(k+1)<\infty$, then $\Cu: B_{p}\rightarrow B_{p}$ is bounded.
\\ (2)\ If $\Cu: B_{p}\rightarrow B_{p}$ is bounded, then for any $\varepsilon>0$, $\sum_{k=1}^{\infty}\mu_{k}^{p}k^{p-1}\log^{p-1-\varepsilon}(k+1)<\infty$.
\end{theorem}
\begin{proof}
(1). Let $f\in B_{p}$, by  Holder 's  inequality we
have
\[ \begin{split}
||\Cu(f)||^{p}_{B_{p}}&=\sum_{k=1}^{\infty}\mu_{k}^{p}k^{p-1}\left|\sum_{j=1}^{k}\widehat{f}(j)\right|^{p}\\
&\leq \sum_{k=1}^{\infty}\mu_{k}^{p}k^{p-1}\left(\sum_{j=1}^{k}j^{p-1}|\widehat{f}(j)|^{p}\right)\left(\sum_{j=1}^{k}\frac{1}{j}\right)^{p-1}\\
& \lesssim ||f||^{p}_{B_{p}}\sum_{k=1}^{\infty}\mu_{k}^{p}k^{p-1}\log^{p-1}(k+1)\\
& \lesssim ||f||^{p}_{B_{p}}.
  \end{split} \]
This means  $\Cu: B_{p}\rightarrow B_{p}$ is bounded.

(2). Assume that $\Cu: B_{p}\rightarrow B_{p}$ is bounded.  We only need to consider that $\varepsilon$ is sufficiently small. For any given $0<\varepsilon<1-\frac{1}{p}$, let
$$f_{\varepsilon}(z)=\sum_{k=2}^{\infty}\frac{z^{k}}{k(\log k)^{\frac{1+\varepsilon}{p}}}.$$
It is easy to verify that $f_{\varepsilon}\in B_{p}$. This shows that
\[ \begin{split}
1&\gtrsim ||f_{\varepsilon}||^{p}_{B_{p}}\gtrsim||\Cu(f_{\varepsilon})||^{p}_{B_{p}}\\
& =\sum_{k=2}^{\infty}\mu_{k}^{p}k^{p-1}\left|\sum_{j=2}^{k}\frac{1}{j(\log j)^{\frac{1+\varepsilon}{p}}}\right|^{p}\\
&\gtrsim \sum_{k=2}^{\infty}\mu_{k}^{p}k^{p-1}\left(\int_{2}^{k}\frac{dx}{x(\log x)^{\frac{1+\varepsilon}{p}}}\right)^{p}\\
&\gtrsim \sum_{k=2}^{\infty}\mu_{k}^{p}k^{p-1}\log^{p-1-\varepsilon}k.
  \end{split} \]
  The desired result follows.
  \end{proof}
  }



\section*{Data Availability}
No data were used to support this study.

\section*{Conflicts of Interest}
The authors declare that there is no conflict of interest.

\section*{Funding}
The research is  support  by thee Natural Science Foundation of Hunan Province (No. 2022JJ30369).






\begingroup

 \end{document}